\theoremstyle{plain}
\newtheorem{theorem}{Theorem}
\newtheorem{lemma}{Lemma}
\newtheorem{proposition}{Proposition}
\theoremstyle{definition}
\newtheorem{definition}{Definition}
\begin{document}
\renewcommand{\thefootnote}{\fnsymbol{footnote}}

\author{M.~I.~Katsnelson\footnote{Theory of Condensed Matter,
Institute for Molecules and Materials, Radboud University Nijmegen,
The Netherlands. E-mail: m.katsnelson@science.ru.nl} and
V.~E.~Nazaikinskii\footnote{A.~Ishlinsky Institute for Problems in
Mechanics, Moscow, Russia; Moscow Institute of Physics and
Technology, Dolgoprudnyi, Moscow District, Russia. E-mail:
nazay@ipmnet.ru}}

\renewcommand{\thefootnote}{\arabic{footnote})}
\setcounter{footnote}{0}

\title{The Aharonov--Bohm effect for massless Dirac
fermions\\ and the spectral flow of Dirac type operators\\ with
classical boundary conditions}

\date{}

\maketitle

\begin{abstract}
We compute, in topological terms, the spectral flow of an arbitrary
family of self-adjoint Dirac type operators with classical (local)
boundary conditions on a compact Riemannian manifold with boundary
under the assumption that the initial and terminal operators of the
family are conjugate by a bundle automorphism. This result is used
to study conditions for the existence of nonzero spectral flow of a
family of self-adjoint Dirac type operators with local boundary
conditions in a two-dimensional domain with nontrivial topology.
Possible physical realizations of nonzero spectral flow are
discussed.

Keywords: Aharonov--Bohm effect, massless Dirac fermions, graphene,
topological insulators, self-adjoint Dirac operator, spectral flow,
Atiyah--Singer index theorem, Atiyah--Bott index theorem, index
locality principle.
\end{abstract}

\section{Introduction}

Not to mention high-energy physics and quantum field theory, the
ideas of modern geometry and topology become increasingly important
in condensed matter physics \cite{1*,2*,3*,4*,5*,6*,Kats,8*,9*}. In
particular, the Atiyah--Singer index theorem \cite{AtSi0} explains
a topological protection of zero-energy Landau level and related
peculiarities of the quantum Hall effect in graphene
\cite{6*,Kats}. Topologically protected zero modes play an
essential role in the motion of vortices in superfluid helium-3
\cite{5*,10*}. The quantum Hall effect \cite{2*,3*} and topological
insulators \cite{8*,9*} are examples of the states of matter with
topological order parameter.

The Aharonov--Bohm effect (ABE) \cite{11*,12*} has actually
initiated this development. A magnetic flux localized in a region
completely unavailable for a quantum particle (e.g., surrounded by
infinitely high potential barrier) nevertheless affects its motion,
modifying the geometry of quantum space. A periodic dependence of
electron energy levels in a ring as a function of the magnetic flux
through the ring resulting in appearance of a persistent current
(see Ref. \cite{13*} and references therein) is a bright
manifestation of ABE. When the change of the magnetic flow is equal
to an integer number of the flux quanta, the energetic spectrum
should return to its initial state. Until recently, ABE was studied
mainly for usual nonrelativistic electrons described by the
Schr\"{o}dinger equation. After discovery of graphene, the ABE for
ultrarelativistic electrons described by Dirac equation with zero
mass has attracted attention \cite{14*,15*,16*}. From the
mathematical point of view, this is a much richer case. The Dirac
operator is not semibounded and hence its spectral flow \cite{APS3}
can be nonzero. It is worth noting that the nonzero spectral flow
of the Dirac operator has been discussed already in a context of
condensed matter physics. It results in additional forces (``Kopnin
forces'') acting on vortices in superfluid helium-3 \cite{5*,10*}.
Coming back to ABE, it means that the coincidence of the whole
energy spectra at the change of the magnetic flux at integer number
of the flux quanta does not necessarily mean the periodicity of
individual eigenenergies (like the shift $m \rightarrow m+1$
transforms $Z$ to itself). Nonzero spectral flow corresponds to a
physical situation when an adiabatically slowly varying magnetic
field leads to a production of electron--hole (or, in general,
particle--antiparticle) pairs from the vacuum: ``positron'' levels
cross zero-energy level transforming into electron ones. The vacuum
reconstruction effects were discussed in physics of superfluid
helium-3 \cite{5*} and in physics of graphene \cite{Kats} but
without any relation with ABE. Here we study conditions of
existence of nonzero spectral flow of a family of Dirac-like
self-adjoint operators with local boundary conditions in a domain
with nontrivial topology.

\medskip

In a bounded domain $X\subset\mathbb{R}^2$ with smooth boundary
$\partial X$ (see Fig.~\ref{fig01}), consider the boundary value
problem
\begin{equation}\label{ei1}
     \biggl(\begin{matrix}
      0 & -i \frac{\partial }{\partial x}-\frac{\partial }{\partial y} \\
      -i\frac{\partial }{\partial x} +\frac{\partial }{\partial y}& 0 \\
    \end{matrix}\biggr)\begin{pmatrix}
      u_1 \\
      u_2 \\
    \end{pmatrix}u=\begin{pmatrix}
      f_1 \\
      f_2 \\
    \end{pmatrix}\quad \text{in $X$} ,\quad (n_y-in_x)u_1=Bu_2\quad\text{on $\partial X$,}
\end{equation}
where $n_x$ and $n_y$ are the inward normal components and $B$ is a
nonvanishing real-valued function on the boundary. (Berry and
Mondragon~\cite{19*} were the first to consider boundary conditions
of this kind.)
\begin{figure}
\centering
  \includegraphics{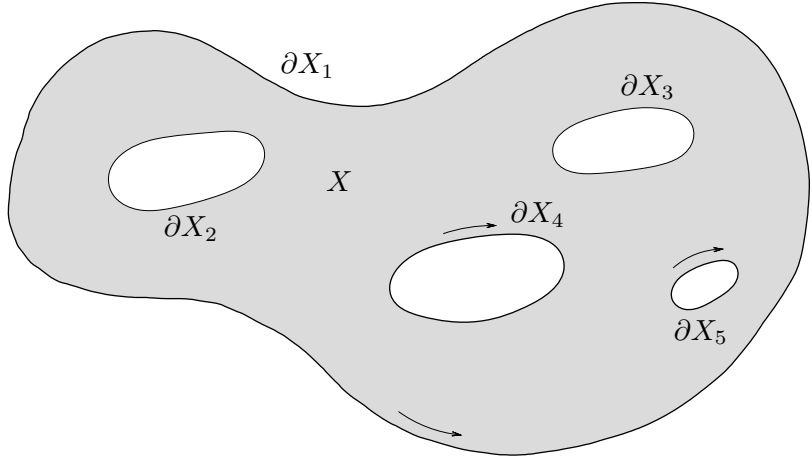}
 \vglue5pt
\caption{Example of a domain~$X$ with $m=5$ boundary components.
The bold lines show the boundary components $\partial^+\!X=\partial
X_1\cup\partial X_4\cup\partial X_5$ on which $B>0$ in the boundary
condition in~\eqref{ei1}. (It is only these components that
contribute to the spectral flow according to formula~\eqref{e9}.)
The arrows show the positive sense of going around the contour when
computing the winding number of the gauge transformation $\mu$
along the corresponding boundary component.\label{fig01}}
\end{figure}
The operator $D_0$ corresponding to this problem is self-adjoint
and Fredholm on $L^2(X,\mathbb{C}^2)$. Next, let $\mu$ be a smooth
function on $X$ with $\lvert\mu\rvert=1$. The ``gauge
transformation'' $D_0\to \mu D_0\mu^{-1}$ takes $D_0$ to the
operator $D_1=D_0+Q_1$ (where $Q_1$ is a self-adjoint matrix
function) with the same boundary conditions. In physicists'
language, the gradient of the phase $\mu$ is an abelian ($U(1)$)
gauge field, i.e., an electromagnetic vector potential. We consider
a two-dimensional domain with $m-1$ holes pierced by magnetic flux
tubes. The case in which all magnetic fluxes through the holes are
integer multiples of the magnetic flux quantum corresponds to
$\mu=1$. Let us join $Q_1$ with $Q_0=0$ by a continuous family
$Q_t$, $t\in[0,1]$, of self-adjoint matrix functions. The spectral
flow $\operatorname{sf}\{D_t\}$ of the family
\begin{equation}\label{ei1a}
  D_t=D_0+Q_t,
\end{equation}
i.e., the number of eigenvalues of $D_t$ that changed their sign
from minus to plus as the parameter $t$ varies from $0$ to $1$
minus the number of eigenvalues that changed their sign from plus
to minus, does not change under continuous deformations of the
family provided that $D_0$ and $D_1$ remain isospectral in the
course of deformation. As far as the authors know, the problem of
finding this spectral flow (also for families of Dirac operators of
more general form) was posed for the first time and partially
solved in \cite{Pro1}, where the spectral flow was computed up to
an integer factor $c_m$ depending on the number $m$ of boundary
components. Further, it was shown in \cite{Pro1} that $c_2=1$, and
it was conjectured that $c_m=1$ for all $m$. In the present paper,
we establish a general result (see Theorem~\ref{t1} below), which,
in particular, proves this conjecture to be true. Thus, Theorem 1
in \cite{Pro1} acquires the following form:
\begin{theorem}\label{co1}
The spectral flow of the family \eqref{ei1a} is given by the
formula
\begin{equation}\label{e9}
    \operatorname{sf}{D_t}=\operatornamewithlimits{wind}_{\partial\mathstrut^+\!X}\mu,
\end{equation}
where $\partial\mathstrut^+\!X$ is the part of $\partial X$ where
$B>0$ and
\begin{equation*}
    \operatornamewithlimits{wind}_{\partial\mathstrut^+\!X}\mu
    =\frac1{2\pi i}\oint_{\partial\mathstrut^+\!X}\frac{d\mu}\mu
\end{equation*}
is the winding number of the restriction of the function $\mu$ to
$\partial\mathstrut^+\!X$. \textup(The set
$\partial\mathstrut^+\!X$ is a union of finitely many
circles\textup; when defining the winding number, the positive
sense of any of these circles is the one for which the domain $X$
remains to the left when moving along the circle.\textup)
\end{theorem}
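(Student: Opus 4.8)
The plan is to reduce the computation of $\operatorname{sf}\{D_t\}$ to an index computation on a compact three‑dimensional manifold with boundary, and then to evaluate that index by the Atiyah--Bott index theorem together with the index locality principle. First I would observe that the family $D_t=D_0+Q_t$ consists of self‑adjoint operators throughout (bounded self‑adjoint perturbations of the self‑adjoint $D_0$), so $\operatorname{sf}\{D_t\}$ is well defined; moreover, since the self‑adjoint matrix functions form a convex set, any two admissible paths from $Q_0=0$ to $Q_1$ are homotopic through such paths with the endpoints $D_0$ and $D_1$ held fixed, and hence $\operatorname{sf}\{D_t\}$ depends only on $D_1=\mu D_0\mu^{-1}$. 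Because $\mu$ is a scalar $U(1)$‑function, conjugation by $\mu$ preserves the boundary condition in \eqref{ei1}. Therefore the suspension $\mathcal D=\partial_t+D_t$, which is again of Dirac type, descends to an operator on $Y=X\times S^1$ acting on sections of the $\mathbb C^2$‑bundle $E_\mu$ obtained by gluing $X\times[0,1]\times\mathbb C^2$ via $u(x,1)=\mu(x)u(x,0)$, and carrying the local boundary condition of \eqref{ei1} along trivially in $t$ yields a classical (local) elliptic boundary condition on $\partial Y=\partial X\times S^1$. A standard Atiyah--Patodi--Singer type argument then gives $\operatorname{sf}\{D_t\}=\operatorname{ind}\mathcal D$ (signs fixed as usual).

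Second, I would apply the Atiyah--Bott index theorem for Dirac type operators with classical local boundary conditions to $\mathcal D$. The interior contribution vanishes identically because $Y$ is odd‑dimensional: the interior index density is a Chern--Weil characteristic form built from the symbol data, hence a sum of forms of even degree, and so it has no component of (odd) top degree $\dim Y=3$. Consequently $\operatorname{ind}\mathcal D$ reduces to a pure boundary integral over $\partial Y$, and by the index locality principle this integral splits as a sum of model contributions, one for each connected component $\partial X_j\times S^1$ of $\partial Y$. Thus it remains to compute one such contribution per boundary circle $\partial X_j$ of $X$.

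Third — and this is the substantive step — I would compute the contribution of the torus $\partial X_j\times S^1$. Passing to boundary normal coordinates $(r,s)$ on $X$ near $\partial X_j$ (with $r\ge0$ the normal and $s$ arc length) and to the translation‑invariant model on $\{r\ge0\}\times S^1_s\times S^1_t$, the boundary condition $(n_y-in_x)u_1=Bu_2$ becomes a local condition whose symbol is a rank‑one subbundle of $\mathbb C^2$ that must be complementary to the appropriate eigenbundle of the tangential symbol. The crucial observation is that whether this complementarity holds with respect to the ``positive'' or the ``negative'' eigenbundle — equivalently, whether the model contribution equals a full winding number or vanishes — is controlled precisely by $\operatorname{sign}B$ on $\partial X_j$, which is constant there since $B$ is nonvanishing and $\partial X_j$ is connected. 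A direct evaluation of the resulting transgression $2$‑form, keeping track of the gluing by $\mu$ and of the orientation of $\partial X_j$ (domain on the left), gives $\operatornamewithlimits{wind}_{\partial X_j}\mu$ when $B>0$ on $\partial X_j$ and $0$ when $B<0$. Summing over $j$ yields $\operatorname{sf}\{D_t\}=\sum_{j:\,B>0}\operatornamewithlimits{wind}_{\partial X_j}\mu=\operatornamewithlimits{wind}_{\partial\mathstrut^+\!X}\mu$, which is \eqref{e9}.

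I expect the third step to be the main obstacle: one must verify that the boundary condition in \eqref{ei1} satisfies the ellipticity hypotheses required by the Atiyah--Bott theorem, identify the exact local boundary index density, and carry out the model computation producing the dichotomy ``full winding versus zero'' according to $\operatorname{sign}B$. This is, in essence, the content of the general Theorem~\ref{t1}; once that theorem is available, Theorem~\ref{co1} follows simply by specializing it to the abelian two‑dimensional situation with the $B$‑type boundary conditions of \eqref{ei1}. The remaining points — the Fredholm property of $\mathcal D$, continuity of the eigenvalue crossings, and smooth dependence on $t$ — are routine and I would not dwell on them.
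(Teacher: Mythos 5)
There is a genuine gap, and it sits exactly where you flag it. Your first step (the suspension $\partial_t+D_t$ on $X\times S^1$ with the bundle glued by $\mu$, and $\operatorname{sf}\{D_t\}=\operatorname{ind}$ of that boundary value problem) is indeed Theorem~\ref{t1} of the paper, so that part is fine to assume. But your third step --- the dichotomy that each boundary torus $\partial X_j\times S^1$ contributes $\operatorname{wind}_{\partial X_j}\mu$ when $B>0$ and $0$ when $B<0$, with coefficient exactly one --- is the entire content of Theorem~\ref{co1} (it is precisely the determination of the unknown coefficients $c_m$ from \cite{Pro1}), and you do not prove it: you assert the outcome of ``a direct evaluation of the resulting transgression $2$-form'' and then claim that ``this is, in essence, the content of the general Theorem~\ref{t1}.'' That attribution is wrong. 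Theorem~\ref{t1} says only that the spectral flow equals an analytic index; it contains no evaluation of that index, no boundary density, and no statement about which components of $\partial X$ contribute. So ``specializing Theorem~\ref{t1}'' does not finish the proof; the model computation still has to be done, and your proposal leaves it as an acknowledged obstacle.

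Your second step is also shakier than stated. The Atiyah--Bott theorem gives a topological (K-theoretic) index, not a formula of the form ``interior local density plus boundary local density''; the claim that the interior term is a Chern--Weil form of even degree and hence vanishes on the odd-dimensional $X\times S^1$ is a heuristic borrowed from local index theory and would itself need justification for local boundary value problems, as would the existence of a per-component boundary density before you can ``split by locality.'' The paper deliberately avoids this route: it states explicitly that it does not rely on the Atiyah--Bott formula, and instead uses the equality $\operatorname{sf}=\operatorname{ind}$ together with the index locality principle of \cite[Theorem~4.10]{NSScS99} to justify cutting the planar domain into pieces with at most one hole (checking that the cutting does not change the spectral flow by comparing with contractible domains, where $\mu$ is homotopic to $1$ and the flow vanishes), and then quotes the explicit one-hole ($m=2$) computation of \cite{Pro1}. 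If you want to keep your Atiyah--Bott strategy, you must actually verify the ellipticity of the Berry--Mondragon condition in the sense required, derive the boundary index density, and carry out the $\operatorname{sign}B$ model computation; alternatively, replace your step three by the paper's cutting argument plus the annulus result of \cite{Pro1}.
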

This theorem shows that the coefficients $c_m$ that remained
unfound in~\cite[Theorem~1]{Pro1} are equal to unity for all~$m$.
The same is true for \cite[Theorems~2 and~3]{Pro1}; all unknown
coefficients $c_m$ occurring there are equal to unity.

\medskip

Theorem~\ref{co1} follows from a general result established in the
present paper. We give a computation in topological terms of the
spectral flow of an arbitrary family $\{D_t\}$, $t\in[0,1]$, of
self-adjoint Dirac type operators with local boundary conditions on
a compact Riemannian manifold $X$ with boundary $\partial X$ under
the assumption that $D_1=U D_0 U^{-1}$, where $U$ is some
automorphism of the bundle in which the operators $D_t$ act. (In
contrast with~\cite{Pro1}, we assume neither that the principal
part of $D_t$ is independent of $t$ nor even that the principal parts of
$D_0$ and $D_1$ coincide.) Namely, we prove (see Theorem~\ref{t1}
below) that
\begin{equation}\label{ei2}
    \operatorname{sf}\{D_t\}=\operatorname{ind}\biggl(\frac{\partial }{\partial t} +D\biggr),
\end{equation}
where the right-hand side is the index of an elliptic operator with
boundary conditions on the manifold $X\times S^1$ with boundary,
$t$ being the coordinate on the circle $S^1$ and the operator $D$
being obtained from the family $\{D_t\}$ by clutching the operators
$D_0$ and $D_1$ with the use of the automorphism $U$. (Recall that
formula \eqref{ei2} for families of self-adjoint elliptic operators
on a \textit{closed} manifold $X$ was established in~\cite{APS3}.)
The right-hand side of \eqref{ei2} can be computed by the
Atiyah--Bott formula \cite{AtBo2} (see
also~\cite[Sec.~20.3]{Hor3}). Note, however, that we do not rely on
the Atiyah--Bott formula in the proof of Theorem~\ref{co1};
relation \eqref{ei2} between the spectral flow and the index
permits one to use the localization method
(see~\cite{R:NaSt3,R:NaSt7,AAA}) and cut the domain into parts,
thus reducing the problem to the case of a domain with one hole
($m=2$), for which a straightforward computation was carried out
in~\cite{Pro1}. Note also that the localization method proves to be
an important technical tool when proving relation~\eqref{ei2}
itself. The proof is in many aspects similar to that in
\cite[Proposition~5.6]{NSScS99} of the spectral flow formula for
families of differential operators Agranovich--Vishik elliptic with
parameter on a closed compact manifold but contains a number of new
important lines of argument related to the presence of boundary
conditions.

\section{Spectral Flow}

Recall the definition of spectral flow in the form presented
in~\cite{NSScS99} (cf.~\cite{BLP1}). Let $\{B_t\}$, $t\in[0,1]$, be
a family, continuous in the sense of uniform resolvent convergence,
of unbounded self-adjoint operators with purely discrete spectrum
on a Hilbert space $\mathcal{H}$. Then there exists a partition
$0=t_0<t_1<t_2<\dotsm<t_{n+1}=1$ of the interval $[0,1]$ and real
numbers $\gamma_1,\dotsc,\gamma_{n+1}$ such that $\gamma_j$ does
not lie in the spectrum $\operatorname{Spec}(B_t)$ of the operator
$B_t$ for $t\in[t_{j-1},t_j]$, $\gamma_1=\gamma_{n+1}\le 0$, and if
$\gamma_1<0$, then the half-open interval $[\gamma_1,0)$ does not
contain any points of spectrum of $B_0$ and $B_1$.
\begin{definition}[see~\cite{NSScS99}, Definition~A.18]\label{def-sf}
The \textit{spectral flow} of the family $\{B_t\}$, $t\in[0,1]$, is
the number\footnote{The right-hand side of formula~\eqref{e4} is
independent of the choice of the partition $\{t_j\}$ and the
numbers $\gamma_j$ by Theorem~A.19 in~\cite{NSScS99}.}
\begin{equation}\label{e4}
    \operatorname{sf}\{B_t\}
    =\sum_{j=1}^nm_j\operatorname{sign}(\gamma_j-\gamma_{j+1}),
\end{equation}
where $m_j$ is the number of eigenvalues \textup(counting
multiplicities\textup) of the operator $B_{t_j}$ on the interval
between $\gamma_j$ and $\gamma_{j+1}$.
\end{definition}
This definition is illustrated in Fig.~\ref{fig02}, which, in
particular, clarifies why this definition is consistent with the
notion of spectral flow as the number of eigenvalues passing
through zero (with direction taken into account).
\begin{figure}
\centering
  \includegraphics{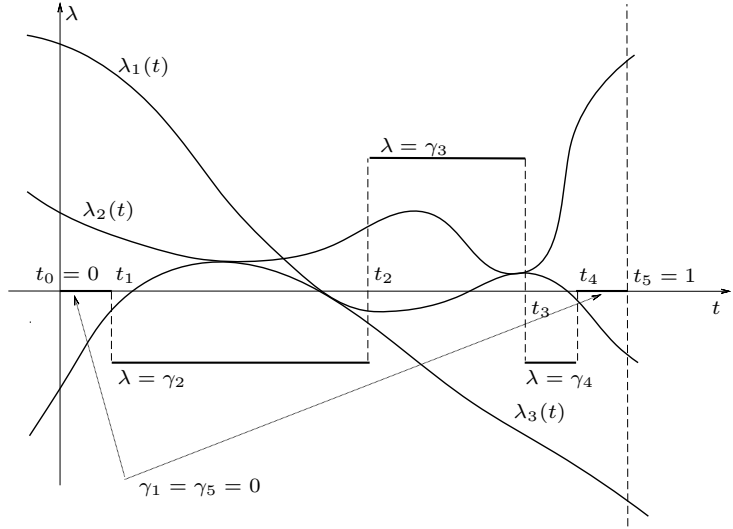}
 \vglue5pt
\caption{Definition of spectral flow. There are three eigenvalues
$\lambda_j(t)$, $j=1,2,3$, contributing to the spectral flow. Both
the computation according to the definition and counting the number
of zero crossings (with regard to direction) give the value $-1$
for the spectral flow.\label{fig02}}
\end{figure}

\section{Main Results}

Let $E$ be an even-dimensional Hermitian vector bundle over a compact
Riemannian manifold~$X$ with boundary, and let
\begin{equation}\label{e1}
  A\colon C^\infty(X,E)\longrightarrow C^\infty(X,E)
\end{equation}
be a formally self-adjoint Dirac type operator.\footnote{Recall
that a linear first-order differential operator \eqref{e1} is
called a Dirac type operator if its principal symbol
$\sigma_A(x,\xi)$ satisfies the condition
$(\sigma_A(x,\xi))^2\!=\!\sum g^{jk}(x)\xi_j\xi_k I$, where $I$ is
the identity operator in the fiber $E_x$ and the $g^{jk}(x)$ are
the (contravariant) components of the metric tensor
(see~\cite{BBW1}). The formal self-adjointness of $A$ is understood
in the standard sense as the condition that the identity
$(u,Av)=(Au,v)$ holds for any sections $u,v\in
C_0^\infty(X\setminus\partial X,E)$, where
$(\operatorname{\boldsymbol\cdot},\operatorname{\boldsymbol\cdot})$
is the inner product on $L^2(X,E)\equiv
L^2(X,E,d\operatorname{vol})$. Here $d\operatorname{vol}$ is the
Riemannian volume element on~$X$.} Next, let a subbundle $L\subset
E_Y$ of dimension $\dim L=\frac12\dim E$ be given in the
restriction $E_Y$ of the bundle $E$ to the boundary $Y=\partial X$
of the manifold $X$ such that
\begin{equation}\label{e2}
    \bigl(\sigma_{A}(x,\mathbf{n}(x))L_x\bigr)\perp L_x \qquad \forall x\in Y,
\end{equation}
where $L_x$ is the fiber of $L$ at $x$ and $\mathbf{n}(x)$ is the
unit inward conormal vector on the boundary. Consider the operator
\eqref{e1} on the set of sections $u\in C^\infty(X,E)$ satisfying
the homogeneous boundary condition
\begin{equation}\label{e3}
     \pi_L(u |_Y)=0,\quad\text{where }\pi_L\colon E_Y\longrightarrow E_Y\slash
     L\quad
     \text{is the natural projection.}
\end{equation}
(In other words, $u(x)\in L_x$ for $x\in Y$.) In particular, the
boundary condition in~\eqref{ei1} is of this form. It is well known
(see~\cite{BrLe3} and~\cite[Chaps.~18 and~19]{BBW1}) that the
boundary condition \eqref{e3} is elliptic, the operator \eqref{e1}
with domain given by this condition is essentially self-adjoint on
$L^2(X,E)$, and its closure~$A_L$ is an unbounded Fredholm
self-adjoint operator on $L^2(X,E)$ with discrete spectrum and with
domain consisting of sections $u$ belonging to the Sobolev space
$H^1(X,E)$ and satisfying condition \eqref{e3} \textup(in which $u
|_Y$ is treated as the element of $H^{1/2}(Y,E_Y)$ obtained from
$u$ by restriction to~$Y$ by virtue of the trace theorem and
$\pi_L$ is treated as a mapping $\pi_L\colon
H^{1/2}(Y,E_Y)\longrightarrow H^{1/2}(Y,E_Y\slash L)$\textup).

Now assume that both the Dirac type operator $A$ \eqref{e1} and the
subbundle $L$ continuously depend on a parameter $t\in[0,1]$
(namely, the coefficients of $A$ and $\pi_L$ depend on $t$
continuously together with all of their
derivatives\footnote{Apparently, one derivative would suffice, but
let us think big.}); i.e., $A=A(t)$ and $L=L(t)$. Moreover, assume
that condition \eqref{e2} holds for each $t$. Then, by Theorem~7.16
in~\cite{BLZ1}, the operator $A(t)_{L(t)}$ continuously depends on
$t$ in the topology of uniform resolvent convergence, and
Definition~\ref{def-sf} specifies the spectral flow
$\operatorname{sf}\{A(t)_{L(t)}\}$ of the family $\{A(t)_{L(t)}\}$,
$t\in[0,1]$.

\medskip

Next, let an automorphism $U\colon E\to E$ of the bundle $E$ be
given such that
\begin{equation}\label{e5}
    A(1)=U A(0) U^{-1},\qquad U(L(0))=L(1).
\end{equation}
Then $U A(0)_{L(0)}U^{-1}=A(1)_{L(1)}$; i.e., the operators
$A(0)_{L(0)}$ and $A(1)_{L(1)}$ are similar and hence isospectral,
so that the spectral flow of the family $\{A(t)_{L(t)}\}$ is a
homotopy invariant (in the class of families satisfying a condition
of the form~\eqref{e5}). Thus, it is natural to pose the problem of
computing it in topological terms.

\medskip

To do this, we introduce an auxiliary elliptic boundary value
problem on the Cartesian product $X\times S^1$ of the manifold $X$
by the circle $S^1$ (see Fig.~\ref{fig03}). Namely, let us define a
bundle $\mathcal{E}$ over $X\times S^1$ as follows. Take the
pullback of $E$ to the product $X\times [0,1]$ via the natural
projection $\pi\colon X\times [0,1]\to X$ and then use the
automorphism $U\colon (\pi^*E)_{X\times\{0\}}\longrightarrow
(\pi^*E)_{X\times\{1\}}$ as the clutching automorphism.\footnote{We
assume the circle $S^1$ to be obtained from the interval $[0,1]$ by
gluing together the endpoints.}  By conditions \eqref{e5}, the
family $\{A(t)\}$ specifies a well-defined differential operator on
the space of sections of the bundle $\mathcal{E}$, while the family
of subbundles $L(t)$ defines a subbundle $\mathcal{L}\subset
\mathcal{E}_{Y\times S^1}$ in the restriction of $\mathcal{E}$ to
the boundary $Y\times S^1$ of the manifold $X\times S^1$.
\begin{figure}
\centering
  \includegraphics{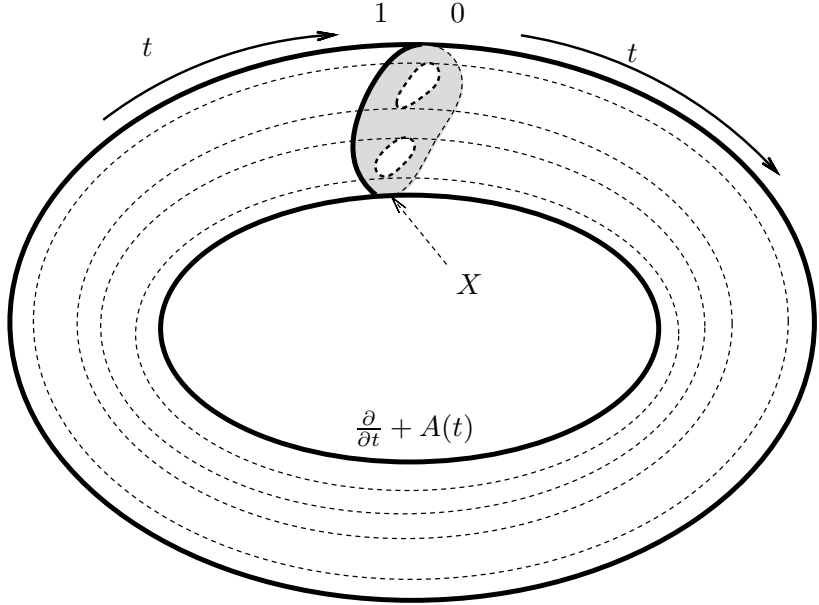}
 \vglue5pt
\caption{The manifold $X\times S_1$ obtained by gluing together the
faces $t=0$ and $t=1$ of the product $X\times [0,1]$, and the operator
$\frac{\partial}{\partial t}+A(t)$.\label{fig03}}
\end{figure}

\begin{proposition}\label{p1}
The operator
\begin{equation}\label{e6}
     \frac{\partial }{\partial t}+A(t)\; \colon C^\infty(X\times S^1,\mathcal{E})
     \longrightarrow C^\infty(X\times S^1,\mathcal{E})
\end{equation}
is elliptic, and the boundary conditions
\begin{equation}\label{e7}
     \pi_\mathcal{L}(u |_{Y\times S^1})=0,\quad\text{where }\pi_\mathcal{L}\colon
     \mathcal{E}_{Y\times S^1}\longrightarrow \mathcal{E}_{Y\times S^1}\slash
     \mathcal{L}\quad
     \text{is the natural projection,}
\end{equation}
defined by the subbundle $\mathcal{L}$, are elliptic for the
operator \eqref{e6}. The closure $\bigl(\frac{\partial
}{\partial t}+A(t)\bigr)_\mathcal{L}$ of the operator \eqref{e6}
from the domain specified by conditions \eqref{e7} is an unbounded
Fredholm operator on $L^2(X\times S^1,\mathcal{E})$ with domain
$\mathfrak{D}_\mathcal{L}$ consisting of the sections $u\in
H^1(X\times S^1,\mathcal{E})$ satisfying condition \eqref{e7}.
\end{proposition}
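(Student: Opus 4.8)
The plan is to prove the three assertions of the proposition --- ellipticity of $\partial/\partial t+A(t)$, the Shapiro--Lopatinskii (SL) condition for the boundary condition \eqref{e7}, and the Fredholm property with the stated domain --- in that order, the SL condition being where the real work lies. For ellipticity I would compute the principal symbol. Since $A(t)$ is of Dirac type and formally self-adjoint, $\sigma_{A(t)}(x,\xi)$ is Hermitian for real $\xi$ and satisfies $\sigma_{A(t)}(x,\xi)^2=\lvert\xi\rvert^2I$; the principal symbol of $\partial/\partial t+A(t)$ at a point of $T^*(X\times S^1)$ with covector $(\xi,\tau)$ ($\tau$ dual to the circle variable) is $i\tau I+\sigma_{A(t)}(x,\xi)$, and
\[
\bigl(i\tau I+\sigma_{A(t)}(x,\xi)\bigr)^{*}\bigl(i\tau I+\sigma_{A(t)}(x,\xi)\bigr)=\bigl(\tau^2+\lvert\xi\rvert^2\bigr)I ,
\]
which is invertible for $(\xi,\tau)\neq0$. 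Note that, unlike $A(t)$, the operator $\partial/\partial t+A(t)$ is neither of Dirac type nor formally self-adjoint, so the last part of the proof must rely on the general theory of elliptic boundary value problems rather than on the Dirac-specific facts quoted for $A(t)_{L(t)}$.

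Next I would verify the SL condition. Fix $(x,t)\in Y\times S^1$ and a nonzero covector $(\xi',\tau)$ tangential to the boundary, set $\alpha:=\sigma_{A(t)}(x,\mathbf n(x))$ and $\beta:=\sigma_{A(t)}(x,\xi')$, and write down the model ODE on the inward normal half-line $s>0$, obtained by freezing $(\xi',\tau)$ and replacing the conormal covariable by $-i\,d/ds$: it is $v'=Mv$ with $M=\tau\alpha-i\alpha\beta$. Because $\mathbf n$ is a unit conormal and $\xi'$ is tangential, $\alpha^2=I$, $\beta^2=\lvert\xi'\rvert^2I$, $\alpha\beta+\beta\alpha=0$, and $\alpha,\beta$ are Hermitian; hence $M$ is Hermitian, $M^2=(\tau^2+\lvert\xi'\rvert^2)I$, and $\operatorname{tr}M=0$ (by \eqref{e2} the involution $\alpha$ has vanishing signature, and $\alpha\beta$ is conjugate to $-\alpha\beta$ via $\alpha$). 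Consequently the stable subspace $\mathcal M^{+}$ --- the space of boundary values of solutions decaying as $s\to+\infty$, i.e. the eigenspace of $M$ for the eigenvalue $-\sqrt{\tau^2+\lvert\xi'\rvert^2}$, which varies smoothly because it is spectrally isolated --- has dimension $\tfrac12\dim E=\dim\mathcal L_{(x,t)}$, so the SL condition is equivalent to $\mathcal M^{+}\cap\mathcal L_{(x,t)}=0$ at every such point.

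Proving this is the hard part. My approach would be to use the Hermitian form $h(u,v)=(\alpha u,v)$ on $E_x$. Condition \eqref{e2} says $\alpha L(t)_x\perp L(t)_x$, which combined with $\dim L(t)_x=\tfrac12\dim E_x$ forces $\alpha L(t)_x=L(t)_x^{\perp}$, so $h$ vanishes identically on $\mathcal L_{(x,t)}$. On the other hand, for $\tau\neq0$ a direct computation --- decomposing $E_x$ into the $\pm\lvert\xi'\rvert$-eigenspaces of $-i\alpha\beta$ (or, when $\xi'=0$, into the $\pm1$-eigenspaces of $\alpha$) and parametrizing $\mathcal M^{+}$ by one of the summands --- shows that $h$ restricted to $\mathcal M^{+}$ is definite: negative-definite for $\tau>0$ and positive-definite for $\tau<0$. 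Hence $\mathcal M^{+}\cap\mathcal L_{(x,t)}=0$ for $\tau\neq0$. For $\tau=0$ (so $\xi'\neq0$) one has $M=-i\alpha\beta$, and $\mathcal M^{+}$ is then exactly the stable subspace of the operator \eqref{e1} alone at the covector $\xi'$, so that $\mathcal M^{+}\cap L(t)_x=0$ is precisely the SL condition for \eqref{e1} with the boundary condition \eqref{e3} --- which holds by the well-known ellipticity of that boundary value problem (\cite{BrLe3}, \cite[Chaps.~18 and~19]{BBW1}). This completes the verification that \eqref{e7} is elliptic for \eqref{e6}.

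Finally, the Fredholm property and the domain. With \eqref{e6} elliptic and \eqref{e7} satisfying the SL condition, the classical theory of elliptic boundary value problems (\cite[Sec.~20.1]{Hor3}, or the Boutet de Monvel calculus) supplies a parametrix, elliptic regularity, and the a priori estimate $\lVert u\rVert_{H^1(X\times S^1,\mathcal E)}\le C\bigl(\lVert(\partial/\partial t+A(t))u\rVert_{L^2}+\lVert u\rVert_{L^2}\bigr)$ for every $u\in H^1$ obeying \eqref{e7}. Because \eqref{e7} is a smooth zeroth-order projection, the smooth sections obeying \eqref{e7} are dense, in the $H^1$ norm, in the set of $H^1$ sections obeying \eqref{e7}; so on that set the graph norm of \eqref{e6} and the $H^1$ norm are equivalent, and the closure of \eqref{e6} from the smooth domain has domain exactly $\mathfrak D_\mathcal L$. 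The a priori estimate together with the compact embedding $H^1\hookrightarrow L^2$ then gives a finite-dimensional kernel and closed range, and the same reasoning applied to the adjoint boundary value problem (which is again elliptic) gives a finite-dimensional cokernel; hence $\bigl(\partial/\partial t+A(t)\bigr)_\mathcal L$ is Fredholm.
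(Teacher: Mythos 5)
Your proposal is correct, but it follows a genuinely different route from the paper. The paper never verifies the Shapiro--Lopatinskii condition by hand: it forms the doubled operator $\mathfrak{A}$ of \eqref{e10}, observes that it is a \emph{formally self-adjoint Dirac type} operator on $X\times S^1$ whose chiral part is \eqref{e6}, checks in two lines that $\mathfrak{L}=\mathcal{L}\oplus\mathcal{L}$ satisfies the orthogonality condition \eqref{e2} with respect to $\sigma_{\mathfrak{A}}$ (using $\mathfrak{n}(x,t)=(0,\mathbf{n}(x))$ and the hypothesis on $L(t)$), and then quotes the same results of Br\"uning--Lesch and Boo{\ss}-Bavnbek--Wojciechowski already invoked for $A(t)_{L(t)}$, first for $\mathfrak{A}$ and then for its chiral part. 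You instead attack \eqref{e6} directly: symbol computation for ellipticity, the half-line model ODE $v'=Mv$ with $M=\tau\alpha-i\alpha\beta$, the count $\dim\mathcal{M}^{+}=\tfrac12\dim E$, the isotropy of $\mathcal{L}$ for $h(u,v)=(\alpha u,v)$ versus its definiteness on $\mathcal{M}^{+}$ for $\tau\neq0$, reduction to the known ellipticity of $(A(t),L(t))$ at $\tau=0$, and then the general elliptic-BVP machinery (parametrix, a priori estimate, adjoint problem) for the Fredholm property and the identification of the domain with $\mathfrak{D}_{\mathcal{L}}$. Both arguments are sound; your definiteness claim is true and can be proved in one line from the identity $\alpha M+M\alpha=2\tau I$, which gives $(\alpha v,v)=-\tau\lambda^{-1}\lVert v\rVert^2$ for $Mv=-\lambda v$, $\lambda=\sqrt{\tau^2+\lvert\xi'\rvert^2}$. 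What each approach buys: the paper's doubling trick is shorter, stays entirely inside the Dirac-type framework for which the self-adjoint elliptic boundary-condition theory is quoted anyway, and sidesteps exactly the difficulty you correctly flag (that $\partial/\partial t+A(t)$ is neither of Dirac type nor formally self-adjoint); your argument is more self-contained at the symbol level, makes the mechanism of ellipticity of \eqref{e7} explicit, but has to lean on the general theory of elliptic boundary value problems (H\"ormander/Boutet de Monvel) for the Fredholm property and the domain statement rather than on the Dirac-specific references.
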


Now we are in a position to state the main theorem of the present
paper.

\begin{theorem}\label{t1}
One has
\begin{equation}\label{e8}
    \operatorname{sf}\{A(t)_{L(t)}\}
    =\operatorname{ind} \biggl(\frac{\partial }{\partial t}
    +A(t)\biggr)_\mathcal{L}.
\end{equation}
\end{theorem}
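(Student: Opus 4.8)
The plan is to identify the two homotopy invariants in \eqref{e8} by first deforming the family $\{(A(t),L(t))\}$ into a normal form, then proving an additivity (excision) formula for the index of the elliptic boundary value problem of Proposition~\ref{p1} that permits cutting $X\times S^1$ along slices $X\times\{s\}$ on which $A(s)$ is invertible, and finally evaluating each resulting piece by reducing it, near a simple eigenvalue crossing, to a finite-dimensional model. This follows the scheme of \cite[Proposition~5.6]{NSScS99} for the closed case; the genuinely new point is that the cutting hypersurfaces $X\times\{s\}$ intersect the boundary $Y\times S^1$, so the pieces are manifolds with corners and the whole gluing argument has to be carried out for boundary value problems rather than for operators on closed manifolds.

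I would begin with two homotopy reductions. Both sides of \eqref{e8} are invariant under homotopies of $(A(t),L(t))$ through families satisfying \eqref{e2} and sharing the \emph{fixed} clutching data \eqref{e5}: the left-hand side because the spectral flow of such a family is a homotopy invariant (its endpoints being isospectral), the right-hand side because such a homotopy induces a homotopy of elliptic boundary value problems on the fixed manifold $X\times S^1$, whose index is therefore constant. First, replacing $A(t)$ by $A(t)+cI$ with $c\notin\operatorname{Spec}(A(0))$ — a lower-order perturbation affecting neither the principal symbol nor conditions \eqref{e2}, \eqref{e5} — makes both $A(0)$ and $A(1)$ invertible, so the seam $t=0$ of $S^1$ becomes a slice of invertibility. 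Second, by a homotopy supported away from that seam, together with standard parametric perturbation theory for the crossing eigenvalue (carrying the boundary condition $L(t)$ along), I would arrange that $A(t)_{L(t)}$ is invertible for all $t$ outside finitely many disjoint closed arcs $I_1,\dots,I_N\subset S^1$, in the interior of each of which exactly one eigenvalue crosses zero, simply and transversally; moreover, after conjugation by a $t$-dependent unitary trivializing the crossing eigenline, the operator $\frac{\partial}{\partial t}+A(t)$ splits near $I_k$ as an orthogonal sum of a summand whose spatial part $A$ is invertible throughout and a scalar summand $\frac{\partial}{\partial t}+\lambda_k(t)$ with $\lambda_k(t_k)=0$ and $\lambda_k'(t_k)\neq0$.

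The core step is the excision formula. Choose points $s_1<\dots<s_M$ on $S^1$, none a crossing, with $A(s_j)$ invertible for every $j$ and such that each arc they cut off contains at most one of the $I_k$. Cutting $X\times S^1$ along the slices $X\times\{s_j\}$ produces manifolds with corners $X\times[s_{j-1},s_j]$ (indices taken cyclically); on the two new faces over each $X\times\{s_j\}$ I impose \emph{complementary} Atiyah--Patodi--Singer spectral conditions built from $P_{\ge0}(A(s_j))$ and $P_{<0}(A(s_j))$, which are well defined because $A(s_j)$ is invertible, and I keep the original lateral condition $\mathcal L$ on $Y\times[s_{j-1},s_j]$; call the resulting boundary condition $\mathcal L_j$. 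I would verify that each problem on $X\times[s_{j-1},s_j]$ is elliptic (hence Fredholm) — away from the corners this is the Shapiro--Lopatinskii condition, checked from $\sigma_A$ and the defining property \eqref{e2} of $L$, and at the corners one uses the invertibility of $A(s_j)$ together with a quarter-space model computation — and then prove the additivity
\begin{equation*}
  \operatorname{ind}\bigl(\tfrac{\partial}{\partial t}+A(t)\bigr)_{\mathcal L}
  =\sum_{j=1}^{M}\operatorname{ind}\bigl(\tfrac{\partial}{\partial t}+A(t)\bigr)_{\mathcal L_j},
\end{equation*}
which is an instance of the index locality principle. The proof would proceed by the localization method of \cite{R:NaSt3,R:NaSt7,AAA}: gluing parametrices of the pieces across the slices $X\times\{s_j\}$ using the invertibility of the normal operator there, carried out compatibly with the lateral boundary $Y$. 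I expect this excision theorem for boundary value problems on manifolds with corners to be the main technical obstacle.

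It remains to evaluate the summands. On an arc carrying no crossing, $A(t)$ is invertible throughout and the problem with $\mathcal L_j$ (complementary APS conditions at the cut ends) is seen directly to be invertible — one solves the transport equation explicitly — so its index is $0$, matching the zero contribution of such an arc to the spectral flow. On an arc carrying the single crossing at $t_k$, the orthogonal splitting reduces the index, up to an index-$0$ invertible summand, to that of the scalar problem $\frac{\partial}{\partial t}+\lambda_k(t)$ on an interval with APS end conditions (the boundary condition $L(t)$ being automatically satisfied by the crossing eigensection), and an elementary ODE computation evaluates the latter to $\operatorname{sign}\bigl(\lambda_k'(t_k)\bigr)$, which is exactly the contribution of the crossing at $t_k$ to $\operatorname{sf}\{A(t)_{L(t)}\}$ in the sense of Definition~\ref{def-sf}. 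Summing over the arcs yields $\operatorname{ind}\bigl(\tfrac{\partial}{\partial t}+A(t)\bigr)_{\mathcal L}=\operatorname{sf}\{A(t)_{L(t)}\}$, which is \eqref{e8}.
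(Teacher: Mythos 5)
Your strategy is a genuinely different route from the paper's, and in outline it is a recognized way to prove results of the type \eqref{e8}: put the family in general position with finitely many simple transversal zero crossings, cut the parameter circle at slices where $A(s_j)_{L(s_j)}$ is invertible, impose complementary spectral (APS) conditions at the cuts, prove index additivity, and evaluate each piece by a scalar model. The paper does none of this. It never introduces cuts transversal to the lateral boundary: instead it passes to the infinite cylinder $X\times\mathbb{R}$ and the weighted operators $\mathcal{A}(\theta,\gamma)=\frac{\partial}{\partial t}+A(\tau(t,\theta))$ on $L^2_{0\gamma}$, where the role of your APS conditions at the ends is played by exponential weights at infinity. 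Fredholmness then requires only a frozen-coefficient parametrix compatible with the local lateral condition (Lemma~\ref{l1}); no genericity of the family is needed, because the eigenvalue counts of Definition~\ref{def-sf} enter directly through the elementary computation of the jump of $\operatorname{ind}\mathcal{A}(\theta,\gamma)$ as the weight $\gamma$ crosses eigenvalues (Lemma~\ref{l2}); and the comparison between the cylinder and $X\times S^1$ (Lemma~\ref{l4}) is done by the same index locality principle you invoke, but applied to modifications at the ends of the cylinder rather than to a decomposition into compact pieces. The paper also disposes of the $t$-dependence of $L(t)$ once and for all by an explicit conjugation to a constant subbundle (step b of its proof), which your sketch only handles implicitly (``carrying the boundary condition $L(t)$ along'').

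The substantive gap in your proposal is exactly the step you flag as the main obstacle: the Fredholm theory and the index additivity for the problems on the pieces $X\times[s_{j-1},s_j]$. These are manifolds with corners $Y\times\{s_j\}$ at which the nonlocal APS condition built from $P_{\ge0}(A(s_j))$, $P_{<0}(A(s_j))$ meets the local condition determined by $L$ (indeed by the varying $L(t)$), and neither the Shapiro--Lopatinskii analysis away from the corner nor the phrase ``quarter-space model computation'' supplies the required ellipticity, regularity up to the corner, or the excision theorem in this mixed local/global corner setting; this is precisely the analysis the paper's weighted-cylinder formulation is engineered to avoid, since there the only boundary is $Y\times\mathbb{R}$ (or $Y\times S^1$) and the locality principle is applied to operators that are already known to be Fredholm. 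A second, lesser gap is the genericity reduction: to split degenerate crossings and make them transversal you must perturb by more than scalars $c(t)I$ (e.g.\ by $t$-dependent symmetric finite-rank or zeroth-order terms vanishing near the seam), and you must check that such perturbations keep both sides of \eqref{e8} defined and unchanged and that the orthogonal splitting near a crossing is compatible with the boundary condition; this is plausible and standard, but it is an extra layer of argument the paper simply does not need, since Lemmas~\ref{l1}--\ref{l4} reproduce the sum in Definition~\ref{def-sf} without putting the family in general position. If you supply the corner ellipticity and additivity theorem (or replace it by an abstract Robbin--Salamon-type argument for $\frac{d}{dt}+A(t)_{L(t)}$ together with an identification of the abstract index with the index of the boundary value problem on $X\times S^1$), your route closes; as written, the decisive ingredient is assumed rather than proved.
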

The right-hand side of \eqref{e8} is the \textit{analytic index} of
the operator $\bigl(\frac{\partial }{\partial
t}+A(t)\bigr)_\mathcal{L}$, i.e., the difference of dimensions of
its kernel and cokernel, which can be expressed in topological
terms by the Atiyah--Bott formula \cite{AtBo2} (see
also~\cite[Sec.~20.3]{Hor3}).

\section{Proof of the Main Assertions}

\begin{proof}[Proof of Proposition~\textup{\ref{p1}}]
Consider the operator
\begin{equation}\label{e10}
    \mathfrak{A}=\begin{pmatrix}
      0 & \frac{\partial }{\partial t}+A(t) \\
      -\frac{\partial }{\partial t}+A(t) & 0 \\
    \end{pmatrix}\colon C^\infty(X\times S^1,\mathcal{E}\oplus\mathcal{E})
    \longrightarrow C^\infty(X\times S^1,\mathcal{E}\oplus\mathcal{E}).
\end{equation}
This is a total formally self-adjoint Dirac type operator on
$X\times S^1$ with symbol
\begin{equation}\label{e11}
    \sigma_\mathfrak{A}(x,t,\xi,\xi_0)=\begin{pmatrix}
      0 & i\xi_0I+\sigma_{A(t)}(x,\xi) \\
      -i\xi_0I+\sigma_{A(t)}(x,\xi) & 0 \\
    \end{pmatrix},
\end{equation}
where $\xi_0$ is the momentum variable conjugate to $t\in S^1$, and
the operator $\frac{\partial}{\partial t}+A(t)$ is its chiral part.
The subbundle
$\mathfrak{L}=\mathcal{L}\oplus\mathcal{L}\subset(\mathcal{E}\oplus\mathcal{E})_{Y\times
S^1}$ satisfies a condition of the form \eqref{e2} with respect to
$\sigma_\mathfrak{A}$ and hence specifies self-adjoint elliptic
boundary conditions for $\mathfrak{A}$. Indeed, the conormal vector
to the boundary of $X\times S^1$ at an arbitrary point $(x,t)\in
Y\times S^1$ has the form $\mathfrak{n}(x,t)=(0,\mathbf{n}(x))$,
where $\mathbf{n}(x)$ is the conormal vector to the boundary of $X$
itself and $\mathcal{L}_{(x,t)}=L(t)_x$; hence, for any
\begin{equation*}
    v={}^t(v_1,v_2)\in\mathfrak{L}_{(x,t)},\quad w={}^t(w_1,w_2)
    \in\mathfrak{L}_{(x,t)},\qquad\text{i.e., $v_1,v_2,w_1,w_2\in
    L(t)_x$},
\end{equation*}
we have
\begin{equation*}
    (v,\sigma_\mathfrak{A}(x,t,\mathfrak{n}(x,t))w)=
      (v_1,\sigma_{A(t)}(x,\mathbf{n}(x))w_2)+(v_2,\sigma_{A(t)}(x,\mathbf{n}(x))w_1)=0,
\end{equation*}
because condition \eqref{e2} is satisfied for $A(t)$ and the bundle
$L(t)$. This, again by virtue of the results in~\cite{BrLe3}
and~\cite[Chaps.~18 and~19]{BBW1}, implies the claim of
Proposition~\ref{p1} first for the operator~$\mathfrak{A}$ and
then, as a consequence, for its chiral part~$\frac{\partial
}{\partial t}+A(t)$.
\end{proof}

\begin{proof}[Proof of Theorem~\ref{t1}]
\textbf{a.} Without loss of generality, we assume that
$0\notin\operatorname{Spec}(A(0)_{L(0)})$. (Otherwise, one can
replace the operator $A(t)$ by $A(t)+\varepsilon$ with small real
$\varepsilon$, which changes neither the left- nor the right-hand
side of~\eqref{e8}.)

\medskip

\textbf{b.} Also without loss of generality, we assume throughout
the following that the subbundle $L(t)$ is independent of the
parameter $t$, $L(t)=L(0)\equiv L$, $t\in[0,1]$. Indeed, let
$V(t)\colon E_Y\to E_Y$ be a family of unitary automorphisms of
$E_Y$ such that $V(0)=I$ and $L(t)=V(t)L$, $t\in[0,1]$. (One can
readily construct such a family by solving the Cauchy problem $\dot
V=[P,\dot P]V$, $V(0)=I$, where $P=P(t)$ is the projection onto
$L(t)$ in $E_Y$.) This family can be continued (by a homotopy to
the identity mapping along the variable normal to the boundary) to
a family of unitary automorphisms $W(t)\colon E\to E$ such that
$W(t)\big|_Y=V(t)$. Set
\begin{equation*}
    \widetilde A(t)=W^{-1}(t)A(t)W(t);
\end{equation*}
then, obviously,
\begin{gather*}
    W^{-1}(1)U\widetilde A(0)(W^{-1}(1)U)^{-1}=W^{-1}(1)U\widetilde
    A(0)U^{-1}W(1)=W^{-1}(1)A(1)W(1)=\widetilde A(1),\\
    W^{-1}(1)UL\equiv W^{-1}(1)UL(0)=W^{-1}(1)L(1)=L(0)\equiv L;
\end{gather*}
i.e., conditions of the form \eqref{e5} are satisfied for the
family $\{\widetilde A(t)\}$ and the constant family of subspaces
$\widetilde L(t)=L$ if one takes the automorphism $\widetilde
U=W^{-1}(1)U$. Furthermore,
\begin{equation}\label{e104}
    \operatorname{sf}\{A(t)_{L(t)}\}=\operatorname{sf}\{\widetilde
    A(t)_L\},
\end{equation}
because the operators $A(t)$ and $\widetilde A(t)$ are similar.
Next, the family $W(t)$ generates a bundle isomorphism
$\mathcal{W}\colon\widetilde{\mathcal{E}}\to\mathcal{E}$, where the
bundle $\widetilde{\mathcal{E}}$ over $X\times S^1$, by analogy
with $\mathcal{E}$, is obtained from the pullback of $E$ to
$X\times [0,1]$ by clutching with automorphism $\widetilde U$. The
operator
\begin{equation}\label{e101}
    \mathcal{W}^{-1}\biggl(\frac{\partial }{\partial t}
            +A(t)\biggr)_\mathcal{L}\mathcal{W}
    =\biggl(\frac{\partial }{\partial t}+\widetilde A(t)-W^{-1}(t)
    \frac{\partial W}{\partial t}(t)\biggr)_{\widetilde{\mathcal{L}}}
\end{equation}
has the same index as $\bigl(\frac{\partial }{\partial
t}+A(t)\bigr)_\mathcal{L}$ and acts on the space of sections of
$\widetilde{\mathcal{E}}$ satisfying the boundary condition
associated with the subbundle
$\widetilde{\mathcal{L}}=\mathcal{W}^{-1}\big|_{Y\times
S^1}\mathcal{L}$, for which
$\widetilde{\mathcal{L}}_t=V(t)^{-1}L(t)=L$ for all $t\in S^1$.
Finally, the homotopy
\begin{equation}
     \biggl(\frac{\partial }{\partial t}+\widetilde A(t)
     -\lambda W^{-1}(t)\frac{\partial W}{\partial t}(t)\biggr)
    _{\widetilde{\mathcal{L}}},\qquad \lambda\in[0,1],
\end{equation}
in the class of Fredholm operators reduces the operator
\eqref{e101} for $\lambda=0$ to $\bigl(\frac{\partial }{\partial
t}+\widetilde A(t)\bigr)_{\widetilde{\mathcal{L}}}$, so that
\begin{equation*}
    \operatorname{ind} \biggl(\frac{\partial }{\partial t}+A(t)\biggr)_\mathcal{L}
    =\operatorname{ind}\biggl(\frac{\partial }{\partial t}+\widetilde
    A(t)\biggr)_{\widetilde{\mathcal{L}}},
\end{equation*}
which, together with \eqref{e104}, completes reduction to the case
of a bundle $L(t)=L$ independent of $t$. We omit the tilde over
letters in what follows.

\textbf{c.} In the proof, we need a family of operators on the
infinite cylinder $X\times\mathbb{R}$. Let us describe it. The
pullbacks of the bundle $E$ from $X$ to $X\times\mathbb{R}$ and the
bundle $L$ from $Y$ to $Y\times\mathbb{R}$ will be denoted by the
same letters $E$ and $L$, respectively; this shall not lead to
confusion. The coordinate on the line $\mathbb{R}$ will be denoted
by $t$. For $\alpha,\beta\in\mathbb{R}$, we introduce the weighted
spaces $L^2_{\alpha\beta}(X\times\mathbb{R},E)$ and
$H^1_{\alpha\beta}(X\times\mathbb{R},E)$ of sections $u$ of $E$
with finite norm
\begin{align*}
    \left\| u \right\|_{0,\alpha\beta}&=\biggl\{\int_{-\infty}^0
    \left\|{u(t)} \right\|_{L^2(X,E)}^2e^{2\alpha t}\,dt+\int_0^{\infty}
    \left\|{u(t)} \right\|_{L^2(X,E)}^2e^{2\beta t}\,dt\biggr\}^{1/2}\qquad
    \text{and}\\
    \left\|{u} \right\|_{1,\alpha\beta}&=\biggl\{\int_{-\infty}^0
    \biggl(\left\|{\vphantom{\Big|}\smash{\frac{\partial u(t)}{\partial t}}} \right\|_{L^2(X,E)}^2
    +\left\|{u(t)} \right\|_{H^1(X,E)}^2\biggr)e^{2\alpha t}\,dt\\&\qquad+
    \int_0^{\infty}
    \biggl(\left\|{\vphantom{\Big|}\smash{\frac{\partial u(t)}{\partial t}}} \right\|_{L^2(X,E)}^2
    +\left\|{u(t)} \right\|_{H^1(X,E)}^2\biggr)e^{2\beta
    t}\,dt\biggr\}^{1/2},
\end{align*}
respectively. In particular,
$H^1_{\alpha\beta}(X\times\mathbb{R},E)\subset
L^2_{\alpha\beta}(X\times\mathbb{R},E)$. By
$\mathfrak{D}_{\alpha\beta}$ we denoted the closed subspace of
$H^1_{\alpha\beta}(X\times\mathbb{R},E)$ consisting of the sections
satisfying the boundary conditions determined by $L$; i.e.,
\begin{equation}\label{e12}
    \mathfrak{D}_{\alpha\beta}=\bigl\{u\in H^1_{\alpha\beta}(X\times\mathbb{R},E)\colon
    \pi_Lu=0\bigr\}.
\end{equation}
Let $0\le \theta\le1$. Set\footnote{%
Here, just as above and below, we for brevity omit the standard
smoothing procedure eliminating the jumps of the derivatives (in
the present case, for $t=0$ and $t=\theta$) when describing the
homotopies.}
\begin{equation*}
    \tau(t,\theta)=\begin{cases}
    0,&t\le0,\\
    t,&0\le t\le\theta,\\
    \theta,&\theta\le t.\\
    \end{cases}
\end{equation*}
For $\gamma\in\mathbb{R}$, let
\begin{equation}\label{e15}
    \mathcal{A}(\theta,\gamma)=\frac{\partial }{\partial t}+A(\tau(t,\theta)) \colon
    L^2_{0\gamma}(X\times\mathbb{R},E)\longrightarrow L^2_{0\gamma}(X\times\mathbb{R},E)
\end{equation}
be the operator with domain $\mathfrak{D}_{0\gamma}$ (see
Fig.~\ref{fig04}). Let us state a number of properties of the
operators $\mathcal{A}(\theta,\gamma)$.
\begin{figure}
\centering
  \includegraphics{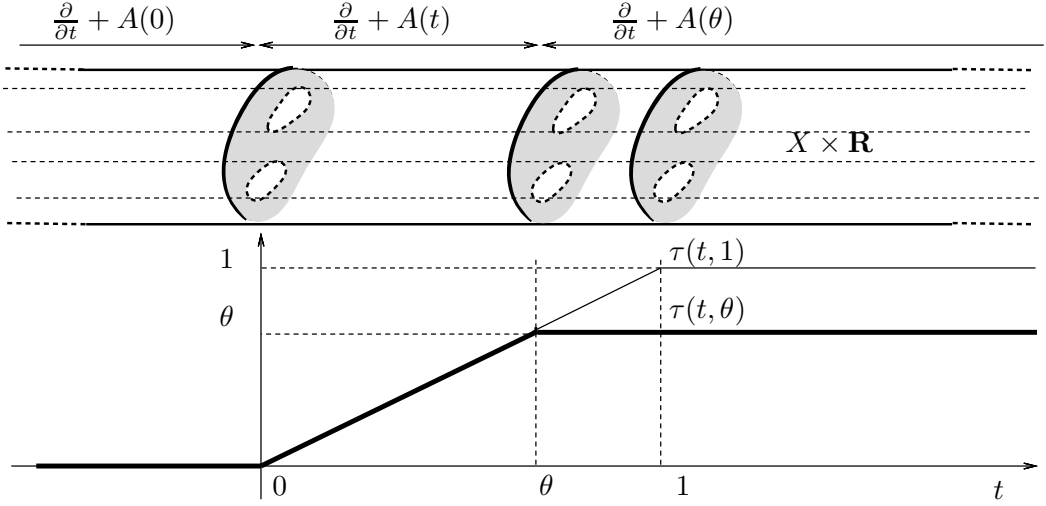}
 \vglue5pt
\caption{The operator $\mathcal{A}(\theta,\gamma)=\frac{\partial
}{\partial t}+A(\tau(t,\theta))$ on the infinite cylinder
$X\times\mathbf{R}$.\label{fig04}}
\end{figure}
\begin{lemma}\label{l1}
The operator $\mathcal{A}(\theta,\gamma)$ is Fredholm for $\theta$
such that $\gamma\notin\operatorname{Spec}(A(\theta)_{L})$, and
$\operatorname{ind}\mathcal{A}(\theta,\gamma)$ is a locally
constant function of $\theta$ on the set of such values
of~$\theta$.
\end{lemma}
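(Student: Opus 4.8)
The plan is to recognize $\mathcal{A}(\theta,\gamma)$ as an elliptic operator, with elliptic local boundary conditions, on the manifold $X\times\mathbb{R}$ with two cylindrical ends, and to read off its Fredholm property from the behaviour at these ends. First I would record the ellipticity. The principal symbol of $\partial_t+A(\tau(t,\theta))$ is $i\xi_0 I+\sigma_{A(\tau)}(x,\xi)$, which is invertible for $(\xi_0,\xi)\ne 0$ because $\bigl(\sigma_{A(\tau)}(x,\xi)\bigr)^2=|\xi|^2 I$; and the boundary condition $\pi_L$ is elliptic for this operator by exactly the computation already performed in the proof of Proposition~\ref{p1} (the subbundle $L$ satisfies \eqref{e2} for $A(s)$ for every $s\in[0,1]$, hence the corresponding condition for the chiral operator holds for every $t$). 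Because $\tau(t,\theta)$ equals $0$ for $t\le 0$ and $\theta$ for $t\ge\theta$, the operator $\mathcal{A}(\theta,\gamma)$ is translation-invariant in $t$ near each end, with model (indicial) operators $\partial_t+A(0)_L$ at $t=-\infty$ and $\partial_t+A(\theta)_L$ at $t=+\infty$.

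Next I would incorporate the exponential weight and apply the standard Fredholm theory for elliptic operators on manifolds with cylindrical ends (see, e.g., \cite{BBW1}). Conjugation by a smooth positive multiplier equal to $1$ near $t=-\infty$ and to $e^{\gamma t}$ near $t=+\infty$ (which only replaces the weighted norms by equivalent ones) turns the problem in $L^2_{0\gamma}$ with domain $\mathfrak{D}_{0\gamma}$ into an unweighted one whose model operator at $+\infty$ becomes $\partial_t+\bigl(A(\theta)_L-\gamma\bigr)$, the model at $-\infty$ staying $\partial_t+A(0)_L$. The usual parametrix construction — an interior parametrix (available by ellipticity) patched with the inverses of the two model operators, the remainders being compact — then shows that $\mathcal{A}(\theta,\gamma)$ is Fredholm if and only if both model operators are invertible on $L^2$ of the line, i.e. if and only if $0\notin\operatorname{Spec}(A(0)_L)$ and $\gamma\notin\operatorname{Spec}(A(\theta)_L)$. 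The first condition holds by the reduction made in part~\textbf{a} above, so $\mathcal{A}(\theta,\gamma)$ is Fredholm exactly for those $\theta$ with $\gamma\notin\operatorname{Spec}(A(\theta)_L)$. I expect this step — setting up the cylindrical-end parametrix correctly, in particular getting the exponential weight at $+\infty$ into the analysis and identifying the indicial operators — to be the main technical point; the remaining items are routine.

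For the local constancy of the index I would argue as follows. The set $\Theta=\{\theta\in[0,1]:\gamma\notin\operatorname{Spec}(A(\theta)_L)\}$ is open, because by Theorem~7.16 in~\cite{BLZ1} the family $\{A(\theta)_L\}$ is continuous in the uniform resolvent topology, so membership of the fixed point $\gamma$ in the resolvent set is an open condition on $\theta$. On $\Theta$, the family $\theta\mapsto\mathcal{A}(\theta,\gamma)$, viewed as bounded operators from the \emph{fixed} space $\mathfrak{D}_{0\gamma}$ into $L^2_{0\gamma}$, is norm continuous: the difference $\mathcal{A}(\theta,\gamma)-\mathcal{A}(\theta',\gamma)$ is multiplication by the section $A(\tau(t,\theta))-A(\tau(t,\theta'))$, and since $s\mapsto A(s)$ is uniformly continuous on $[0,1]$ in the $H^1(X,E)\to L^2(X,E)$ operator norm while $|\tau(t,\theta)-\tau(t,\theta')|\le|\theta-\theta'|$ uniformly in $t$, this difference tends to $0$ in norm as $\theta'\to\theta$. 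Since the index is a homotopy invariant of Fredholm operators, $\operatorname{ind}\mathcal{A}(\theta,\gamma)$ is therefore locally constant on $\Theta$, which completes the plan.
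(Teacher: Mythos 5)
Your proposal is correct and follows essentially the same route as the paper: the paper likewise proves Fredholmness by a frozen-coefficient parametrix on the weighted cylinder, built from the explicit inverses of the translation-invariant operators $\frac{\partial}{\partial t}+A(\tau)$ on $L^2_{\nu\nu}(X\times\mathbb{R},E)$ (it glues them over a finite cover of $[0,1]$ with auxiliary weights $\nu_j$, with $\nu_0=0$ and $\nu_{s+1}=\gamma$ playing the role of your two end models, rather than conjugating the weight away), and it obtains local constancy exactly as you do, from operator-norm continuity of $\theta\mapsto\mathcal{A}(\theta,\gamma)\colon\mathfrak{D}_{0\gamma}\to L^2_{0\gamma}(X\times\mathbb{R},E)$. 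One small wording fix: since the principal symbol of $A(s)$ may depend on $s$, the difference $A(\tau(t,\theta))-A(\tau(t,\theta'))$ is in general a first-order operator rather than multiplication by a section, but the estimate you actually use, in the $H^1(X,E)\to L^2(X,E)$ operator norm uniformly in $t$, is the right one and covers this.
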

\begin{lemma}\label{l2}
If
$\gamma,\widetilde\gamma\notin\operatorname{Spec}(A(\theta)_{L})$
and $\gamma>\widetilde\gamma$, then the difference
$\operatorname{ind}\mathcal{A}(\theta,\widetilde\gamma)-\operatorname{ind}\mathcal{A}(\theta,\gamma)$
is equal to the number of eigenvalues \textup(counting
multiplicities\textup) of the operator $A(\theta)_{L}$ on the
interval $(\widetilde\gamma,\gamma)$.
\end{lemma}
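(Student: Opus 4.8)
The plan is to deduce the lemma from an analysis of the index jumps of a homotopy of Fredholm operators that all act on one and the same weighted space. First I would remove the discrepancy between $L^2_{0\gamma}$ and $L^2_{0\widetilde\gamma}$ by a scalar conjugation: pick a smooth real function $\sigma(t)$ with $\sigma(t)=0$ for $t\le0$ and $\sigma'(t)\equiv\gamma-\widetilde\gamma>0$ for $t$ large. Then multiplication by $e^{\sigma}$ is an isomorphism $L^2_{0\gamma}(X\times\mathbb{R},E)\to L^2_{0\widetilde\gamma}(X\times\mathbb{R},E)$ taking $\mathfrak{D}_{0\gamma}$ onto $\mathfrak{D}_{0\widetilde\gamma}$ (the boundary condition $\pi_L$ is preserved because $e^{\sigma}$ is a scalar), so the conjugated operator
\[
   e^{\sigma}\mathcal{A}(\theta,\gamma)e^{-\sigma}=\frac{\partial}{\partial t}+A(\tau(t,\theta))-\sigma'(t)\colon\mathfrak{D}_{0\widetilde\gamma}\longrightarrow L^2_{0\widetilde\gamma}(X\times\mathbb{R},E)
\]
is Fredholm with the same index as $\mathcal{A}(\theta,\gamma)$. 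I would then connect it to $\mathcal{A}(\theta,\widetilde\gamma)$ by the affine homotopy $\mathcal{B}_\lambda=\frac{\partial}{\partial t}+A(\tau(t,\theta))-\lambda\sigma'(t)$, $\lambda\in[0,1]$, on the fixed domain $\mathfrak{D}_{0\widetilde\gamma}$, so that $\mathcal{B}_0=\mathcal{A}(\theta,\widetilde\gamma)$ and $\mathcal{B}_1=e^{\sigma}\mathcal{A}(\theta,\gamma)e^{-\sigma}$; since $-\lambda\sigma'(t)$ has order zero, every $\mathcal{B}_\lambda$ has the same principal symbol and the same elliptic boundary condition, so it suffices to track the index along this homotopy.

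Next I would locate the values of $\lambda$ at which $\mathcal{B}_\lambda$ fails to be Fredholm, using the criterion behind Lemma~\ref{l1}: such a cylinder operator is Fredholm precisely when the weight exponent used at each end $t\to\pm\infty$ avoids the spectrum of the corresponding limiting operator. At $t\to-\infty$ the limiting operator is $\frac{\partial}{\partial t}+A(0)$ with weight exponent $0$, and this end stays invertible throughout the homotopy because we may assume $0\notin\operatorname{Spec}(A(0)_{L})$. At $t\to+\infty$ the limiting operator is $\frac{\partial}{\partial t}+A(\theta)-\lambda(\gamma-\widetilde\gamma)$ with weight exponent $\widetilde\gamma$, which, after absorbing the weight, is invertible if and only if $\widetilde\gamma+\lambda(\gamma-\widetilde\gamma)\notin\operatorname{Spec}(A(\theta)_{L})$. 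As $\lambda$ runs from $0$ to $1$ the number $\widetilde\gamma+\lambda(\gamma-\widetilde\gamma)$ increases monotonically from $\widetilde\gamma$ to $\gamma$, so $\mathcal{B}_\lambda$ is Fredholm for all but finitely many $\lambda$, the exceptions corresponding, with multiplicity, to the eigenvalues of $A(\theta)_{L}$ lying in $(\widetilde\gamma,\gamma)$.

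The step I expect to be the real obstacle is the computation of the index jump across each exceptional value $\lambda_0$, where $\widetilde\gamma+\lambda_0(\gamma-\widetilde\gamma)=\mu\in\operatorname{Spec}(A(\theta)_{L})$: I claim that there $\operatorname{ind}\mathcal{B}_\lambda$ drops by $m_\mu=\dim\ker(A(\theta)_{L}-\mu)$. This is a phenomenon local to $t=+\infty$, where $A(\tau(t,\theta))=A(\theta)$; decomposing $L^2(X,E)$ over the eigenspaces of $A(\theta)_{L}$, the equation $\mathcal{B}_\lambda u=0$ becomes, near $+\infty$, a collection of scalar ODEs with exponential solutions $e^{-\nu t}\varphi$, and a section lies in $L^2_{0\widetilde\gamma}$ iff it is built from modes with $\nu>\widetilde\gamma+\lambda(\gamma-\widetilde\gamma)$. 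One then has to show that each of the $m_\mu$ modes that disappear as $\lambda$ passes $\lambda_0$ contributes exactly one unit, either to $\dim\ker$ (if the mode extends to a global solution satisfying $\pi_L$ and the decay at $-\infty$) or, via the same mode analysis for the formal adjoint in the dual weighted space, to $\dim\operatorname{coker}$, with precisely one of the two alternatives holding for each independent mode. Equivalently, one can organize this through the short exact sequences $0\to L^2_{0\gamma}\to L^2_{0\widetilde\gamma}\to Q\to0$ (together with the analogous ones for $H^1_{0\gamma}\subset H^1_{0\widetilde\gamma}$ and $\mathfrak{D}_{0\gamma}\subset\mathfrak{D}_{0\widetilde\gamma}$): the operator $\frac{\partial}{\partial t}+A(\tau(t,\theta))$ is a Fredholm morphism of these sequences, additivity of the index gives $\operatorname{ind}\mathcal{A}(\theta,\widetilde\gamma)-\operatorname{ind}\mathcal{A}(\theta,\gamma)$ equal to the index of the operator induced on $Q$, and that operator is concentrated at $t=+\infty$, where it reduces to the translation-invariant model $\frac{\partial}{\partial t}+A(\theta)$ contributing exactly the number of eigenvalues of $A(\theta)_{L}$ in $(\widetilde\gamma,\gamma)$. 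In either route the subtle points are the validity of the exponential mode expansion up to the lateral boundary $Y\times\mathbb{R}$ (which rests on the boundary condition $\pi_L$ and the elliptic theory of~\cite{BrLe3,BBW1}) and the bookkeeping that turns ``number of lost modes'' into ``jump of the index''; the usual smoothing of the corners of $\tau(\,\cdot\,,\theta)$ and of $\sigma$ is understood, as throughout the paper. Summing the jumps over the finitely many exceptional $\lambda\in[0,1]$ then yields $\operatorname{ind}\mathcal{A}(\theta,\widetilde\gamma)-\operatorname{ind}\mathcal{A}(\theta,\gamma)$ equal to the number of eigenvalues of $A(\theta)_{L}$, counted with multiplicities, in $(\widetilde\gamma,\gamma)$, which is the assertion of the lemma.
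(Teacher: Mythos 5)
Your reduction is fine as far as it goes: conjugating by $e^{\sigma}$ correctly converts the change of weight at $t=+\infty$ into a zeroth-order perturbation, and the path $\mathcal{B}_\lambda$ indeed fails to be Fredholm exactly when the effective weight $\widetilde\gamma+\lambda(\gamma-\widetilde\gamma)$ meets $\operatorname{Spec}(A(\theta)_L)$. But the lemma has thereby only been restated, not proved: everything hinges on the assertion that at each such crossing the index drops by exactly $m_\mu=\dim\ker(A(\theta)_L-\mu)$, and this is precisely the step you leave unproved. Your first route assumes the conclusion: the claim that each disappearing mode contributes one unit ``either to $\dim\ker$ or to $\dim\operatorname{coker}$, with precisely one of the two alternatives holding'' is exactly the relative index statement for weighted cylinder operators (a nontrivial fact, usually proved by a duality/pairing argument or by excision to a model problem); nothing in your sketch rules out, say, a mode that contributes to neither, or a simultaneous change of kernel and cokernel with a different net effect. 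Your second route is broken as stated: for $\gamma>\widetilde\gamma$ the inclusion $L^2_{0\gamma}(X\times\mathbb{R},E)\subset L^2_{0\widetilde\gamma}(X\times\mathbb{R},E)$ is dense rather than closed (compactly supported sections are dense in both), so the quotient $Q$ carries only the zero seminorm, there is no short exact sequence of Banach spaces, and the ``additivity of the index'' you invoke is not available; making this precise would force you to replace $Q$ by the finite-dimensional span of the exponential modes, which again is the relative index theorem you are trying to prove.

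The paper closes exactly this gap with the index locality principle instead of a homotopy in $\lambda$: the change of weight at $+\infty$ is a modification supported in $N_1=\{t\ge2\}$, so one may first modify the operator on the disjoint set $N_2=\{t\le1\}$, replacing it there by the translation-invariant operator $\frac{\partial}{\partial t}+A(\theta)$ with weight $\gamma$ at $-\infty$ (an invertible operator, index $0$); after the weight change at $+\infty$ one obtains the globally translation-invariant model \eqref{e102} on the full cylinder, whose kernel (modes $v(x)e^{-\lambda t}$ with $\widetilde\gamma<\lambda<\gamma$) and cokernel (empty) are computed exactly by separation of variables, with no need to discuss how modes at $+\infty$ match conditions at $-\infty$. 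The locality principle then transfers this index increment back to $\mathcal{A}(\theta,\cdot)$. To complete your plan you would need either this localization step or an honest proof of the weighted relative index theorem (e.g., via the adjoint in the dual weighted space and a pairing argument); as written, the central jump computation is asserted rather than established.
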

\begin{lemma}\label{l3}
$\operatorname{ind}\mathcal{A}(0,0)=0$.
\end{lemma}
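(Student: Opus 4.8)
The plan is to exploit the fact that for $\theta=0$ the cut-off $\tau(\cdot,0)$ vanishes identically, so that $\mathcal{A}(0,0)=\frac{\partial}{\partial t}+A(0)$ has coefficients independent of $t$ and acts, together with the ($t$-independent) boundary condition $\pi_L u=0$, on the unweighted space $L^2_{00}(X\times\mathbb{R},E)=L^2(X\times\mathbb{R},E)$ with domain $\mathfrak{D}_{00}$. By assumption~\textbf{a} and the reduction performed in part~\textbf{b}, we have $0\notin\operatorname{Spec}(A(0)_L)$; put $\delta=\operatorname{dist}\bigl(0,\operatorname{Spec}(A(0)_L)\bigr)>0$. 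I will in fact show that $\mathcal{A}(0,0)$ is invertible, which immediately gives $\operatorname{ind}\mathcal{A}(0,0)=0$.

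To this end, first note that $\mathfrak{D}_{00}$, being the subspace of $H^1(X\times\mathbb{R},E)$ cut out by the boundary condition $\pi_L u=0$ on $Y\times\mathbb{R}$, is naturally identified with $H^1\bigl(\mathbb{R},L^2(X,E)\bigr)\cap L^2\bigl(\mathbb{R},\operatorname{dom}A(0)_L\bigr)$, where $\operatorname{dom}A(0)_L$ carries its graph norm; here one uses the elliptic a priori estimate for the boundary value problem \eqref{e1}--\eqref{e3} (see~\cite{BrLe3} and~\cite[Chaps.~18 and~19]{BBW1}), by which the graph norm of $A(0)_L$ is equivalent on $\operatorname{dom}A(0)_L$ to the $H^1(X,E)$ norm. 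Now apply the Fourier transform $\mathcal{F}_{t\to\xi}$ in the variable $t$: it identifies $L^2(X\times\mathbb{R},E)$ with $L^2\bigl(\mathbb{R}_\xi,L^2(X,E)\bigr)$, carries $\frac{\partial}{\partial t}+A(0)$ into fibrewise multiplication by $i\xi+A(0)_L$, and takes $\mathfrak{D}_{00}$ onto the subspace $\bigl\{v:\ (1+|\xi|)\,v\in L^2,\ A(0)_Lv\in L^2\bigr\}$ of $L^2\bigl(\mathbb{R}_\xi,L^2(X,E)\bigr)$.

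It remains to observe that for every $\xi\in\mathbb{R}$ the operator $i\xi+A(0)_L$ is invertible on $L^2(X,E)$, because $A(0)_L$ is self-adjoint with spectrum contained in $\mathbb{R}\setminus(-\delta,\delta)$, and that the relevant norms of the inverse are bounded uniformly in $\xi$: one has $\|(i\xi+A(0)_L)^{-1}\|\le(\xi^2+\delta^2)^{-1/2}$, and from $(i\xi+A(0)_L)v=w$ it follows that $\|A(0)_Lv\|\le\|w\|+|\xi|\,\|v\|\le 2\|w\|$ while $(1+|\xi|)\,\|v\|\le(1+|\xi|)(\xi^2+\delta^2)^{-1/2}\,\|w\|\le C_\delta\,\|w\|$. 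Hence fibrewise multiplication by $(i\xi+A(0)_L)^{-1}$ is a bounded operator from $L^2\bigl(\mathbb{R}_\xi,L^2(X,E)\bigr)$ onto the transformed domain and inverts multiplication by $i\xi+A(0)_L$; transforming back, $\mathcal{A}(0,0)\colon\mathfrak{D}_{00}\to L^2(X\times\mathbb{R},E)$ is an isomorphism, so $\operatorname{ind}\mathcal{A}(0,0)=0$.

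I do not expect any serious obstacle here; the one point deserving care is the identification of the Fourier image of $\mathfrak{D}_{00}$, i.e., commuting the boundary projection $\pi_L$ with the Fourier transform in the transverse variable $t$ and invoking the elliptic estimate to trade the $H^1(X\times\mathbb{R},E)$ norm for the weighted graph norm of $A(0)_L$. Equivalently, and without any Fourier transform, one can write down the inverse explicitly: with the spectral splitting $A(0)_L=A_+\oplus A_-$ into the positive and negative parts of $A(0)_L$, the solution of $\mathcal{A}(0,0)u=f$ is $u(t)=\int_{-\infty}^{t}e^{-(t-s)A_+}f_+(s)\,ds-\int_{t}^{\infty}e^{-(t-s)A_-}f_-(s)\,ds$, where both integrals converge in $L^2(X\times\mathbb{R},E)$ since $A_+\ge\delta$ and $-A_-\ge\delta$; a Plancherel estimate then shows that this $u$ lies in $\mathfrak{D}_{00}$.
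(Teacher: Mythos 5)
Your argument is correct and is essentially the paper's own: the paper disposes of Lemma~\ref{l3} by citing the invertibility of the constant-coefficient operator \eqref{e22} with $\nu=0$, $\tau=0$, whose inverse is exactly the Fourier-integral formula \eqref{e24} that you reconstruct (with the uniform resolvent bounds and the identification of the Fourier image of $\mathfrak{D}_{00}$ spelled out). So you have simply written out in detail the invertibility statement the paper invokes, and no further comparison is needed.
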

\begin{lemma}\label{l4}
$\operatorname{ind}\mathcal{A}(1,0)=\operatorname{ind}\bigl(\frac{\partial
}{\partial t}+A(t)\bigr)_\mathcal{L}$.
\end{lemma}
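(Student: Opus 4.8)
The plan is to realize the closed manifold $X\times S^1$ as the result of cutting the infinite cylinder $X\times\mathbb{R}$ along a single interior hypersurface, and then to invoke the gluing (localization) principle for the index — an instance of the index locality principle — which says that this cutting and the passage to cylindrical ends do not change the index. Throughout I use part~a ($0\notin\operatorname{Spec}(A(0)_L)$) and the reduction in part~b, so that $A(1)=UA(0)U^{-1}$, $U(L)=L$, and hence $A(1)_L=UA(0)_LU^{-1}$ is invertible as well.

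First I would flatten the family near the seam of $S^1$. Since $\operatorname{ind}\bigl(\frac{\partial}{\partial t}+A(t)\bigr)_{\mathcal{L}}$ is a homotopy invariant of the loop $\{A(t)\}$, $t\in S^1$, inside the class of families satisfying~\eqref{e2} with the fixed subbundle $L$ (by Proposition~\ref{p1} each member is Fredholm, and the clutching bundle $\mathcal{E}$ is unchanged because $A(0)$, $A(1)$ are kept fixed), I may replace $A(t)$ by $A(\psi(t))$, where $\psi\colon[0,1]\to[0,1]$ is a smooth weakly increasing surjection with $\psi(0)=0$, $\psi(1)=1$ that is constant near $0$ and near $1$. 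Each value $A(\psi(t))$ coincides with some $A(t')$, so condition~\eqref{e2} persists along the homotopy and the index does not change; henceforth $A(t)\equiv A(0)$ for $t$ near $0$ and $A(t)\equiv A(1)$ for $t$ near $1$, and near the seam $X\times\{0\}=X\times\{1\}\subset X\times S^1$ the operator $\frac{\partial}{\partial t}+A(t)$ has product form $\frac{\partial}{\partial t}+A(0)$, the two sides being identified by the clutching automorphism $U$.

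Next I would cut $X\times S^1$ along the hypersurface $N:=X\times\{0\}$, which does not separate $X\times S^1$, obtaining the single manifold $X\times[0,1]$, and attach to its faces $X\times\{0\}$ and $X\times\{1\}$ the half‑cylinders $X\times(-\infty,0]$ and $X\times[1,\infty)$ carrying the translation‑invariant operators $\frac{\partial}{\partial t}+A(0)$ and $\frac{\partial}{\partial t}+A(1)$ with the boundary condition from $L$. The result is the cylinder $X\times\mathbb{R}$ with an operator equal to $\frac{\partial}{\partial t}+A(\psi(t))$ on $[0,1]$ and to $\frac{\partial}{\partial t}+A(0)$, $\frac{\partial}{\partial t}+A(1)$ for $t\le0$, $t\ge1$; homotoping $\psi$ back to the identity with the end data $A(0)$, $A(1)$ fixed (smoothing the derivative jumps as in the convention used above) identifies it, through Fredholm operators, with $\mathcal{A}(1,0)$. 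Since $A(0)_L$ and $A(1)_L$ are invertible, the transverse operators at both ends are invertible, the limiting operators $\frac{\partial}{\partial t}+A(0)$, $\frac{\partial}{\partial t}+A(1)$ on the full cylinder are invertible (the Fourier transform in $t$ turns them into $i\xi_0+A(0)_L$ and $i\xi_0+A(1)_L$, uniformly invertible for $\xi_0\in\mathbb{R}$), so the cut‑open operator is Fredholm — alternatively, Lemma~\ref{l1} gives the Fredholmness of $\mathcal{A}(1,0)$ directly.

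The hard part will be the gluing principle itself, that is, the equality $\operatorname{ind}\bigl(\frac{\partial}{\partial t}+A(t)\bigr)_{\mathcal{L}}=\operatorname{ind}(\text{cut-open operator on }X\times\mathbb{R})$, which together with the previous step yields the lemma. The difficulty is that $N$ meets the boundary $Y\times S^1$ along $Y\times\{0\}$, so the cut runs along the boundary as well as through the interior, and the usual cylindrical‑end gluing must be carried out in the presence of the boundary condition $\pi_L=0$. I would establish it by gluing parametrices: away from $N$ one uses a parametrix of the elliptic boundary value problem on $X\times S^1$, and in a neighbourhood of $N$ the translation‑invariant inverse furnished by the invertibility of $A(0)_L$; patching these with cut‑off functions produces parametrices on $X\times S^1$ and on $X\times\mathbb{R}$ with compact remainders, and, the transverse data on $N$ being invertible (hence of index zero), the two Fredholm indices coincide with no correction term. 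This is exactly the localization method of~\cite{R:NaSt3,R:NaSt7,AAA} (compare the gluing and splitting formulas for Dirac operators with classical boundary conditions in~\cite{BBW1}); the analytic input is provided by the elliptic estimates in the weighted spaces $H^1_{\alpha\beta}$ introduced above together with Lemma~\ref{l1}.
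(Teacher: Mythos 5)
Your geometric setup --- flatten the family so that the operator is of product type $\frac{\partial}{\partial t}+A(0)$ (respectively $\frac{\partial}{\partial t}+A(1)$) near the seam, then pass between $X\times S^1$ and the cylinder $X\times\mathbb{R}$ by cutting there and attaching translation-invariant ends --- is the same as in the paper. The gap is in your final step, which is where all the content lies. You assert that, because the transverse operator $A(0)_L$ is invertible ``(hence of index zero)'', the cut-and-paste produces ``no correction term''. But patching a parametrix of the boundary value problem on $X\times S^1$ with the translation-invariant inverse near the seam only establishes Fredholmness of the two operators; it does not compare their indices, and that comparison is precisely the assertion of Lemma~\ref{l4}. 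Note also that what matters is invertibility of $A(0)_L$, not vanishing of its index (a self-adjoint Fredholm transverse operator always has index zero, yet the equality of indices can fail when it has a kernel), so the parenthetical suggests the actual mechanism has not been identified. Moreover, the locality principle you invoke (Theorem~4.10 of \cite{NSScS99}, as quoted in the paper) does not by itself give the statement: it says that index \emph{increments} produced by modifications supported in two disjoint sets are independent of each other, and to exploit it one must produce an auxiliary operator, modified away from the seam, for which the increment under the cut-and-glue modification near the seam can actually be computed. Your proposal uses only one set (a neighbourhood of the seam and the ends) and simply asserts the increment vanishes.

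That auxiliary operator is the missing idea, and it is how the paper closes the argument. With $N_1$ a neighbourhood of the ends and of the seam and $N_2$ the middle portion $\{2\varepsilon\le t\le 1-2\varepsilon\}$, one replaces $\mathcal{A}(1,0)$ by $\mathcal{A}(0,0)$ \emph{rewritten in the form}~\eqref{e123}, i.e.\ as $\frac{\partial}{\partial t}+A(0)$ for $t\le\frac12$ and $\frac{\partial}{\partial t}+A(1)$ for $t\ge\frac12$ in the bundle clutched at $t=\frac12$ by $U^{-1}$; this operator agrees with $\mathcal{A}(1,0)$ on a neighbourhood of $N_1$ and differs from it only on $N_2$. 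For this simpler operator both sides of the $N_1$-modification are computable: on the cylinder it is invertible (index $0$), and after cutting away the half-cylinders and clutching by $U$ at the seam the two clutchings cancel, yielding $\bigl(\frac{\partial}{\partial t}+A(0)\bigr)_L$ in the pullback bundle over $X\times S^1$, whose index vanishes by invariance under rotations in $t$. Hence the increment attached to the change on $N_1$ is zero, and by the locality principle the same increment is zero for the original operator $\mathcal{A}(1,0)$, which is the lemma. (This route also sidesteps the issue you correctly flag, that the cut hypersurface meets the boundary $Y\times S^1$: the locality principle is applied with the algebra of functions of $t$ alone, whose commutators with the boundary value problem are compact, so no separate boundary-compatible gluing theorem is needed.) Without this second localization step, or an actual proof of the relative index theorem in the presence of the boundary condition $\pi_L u=0$, your argument assumes what is to be proved.
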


The proof of Lemmas~\ref{l1}--\ref{l4} will be given below. Now let
us show that these lemmas imply the claim of the theorem. The
spectral flow of the family $\{A(t)_{L}\}$ is given by
Definition~\ref{def-sf} for some partition
$0=t_0<t_1<\dotsm<t_{n+1}=1$ and real numbers
$\gamma_1,\dots,\gamma_{n+1}=\gamma_1$, and we can assume that
$\gamma_1=0$ (because we have assumed that
$0\notin\operatorname{Spec}(A(0)_{L})$). Let $m_j$ be the number of
eigenvalues (counting multiplicities) of the operator $A(t_j)_{L}$
in the interval between $\gamma_j$ and $\gamma_{j+1}$. It follows
from Lemma~\ref{l1} that the operator
$\mathcal{A}(\theta,\gamma_j)$ is Fredholm for
$\theta\in[t_{j-1},t_j]$ and
\begin{equation}\label{e18}
    \operatorname{ind}\mathcal{A}(t_j,\gamma_j)
    -\operatorname{ind}\mathcal{A}(t_{j-1},\gamma_j)=0,
    \qquad j=1,\dotsc,n+1.
\end{equation}
By Lemma~\ref{l2},
\begin{equation}\label{e19}
    \operatorname{ind}\mathcal{A}(t_j,\gamma_{j+1})-\operatorname{ind}
    \mathcal{A}(t_j,\gamma_j)=m_j\operatorname{sign}(\gamma_j-\gamma_{j+1}),
    \qquad j=1,\dotsc,n.
\end{equation}
By summing relations \eqref{e18} and \eqref{e19} over all
corresponding $j$, by adding the results, and by taking into
account Lemma \ref{l3} and the relation $\gamma_1=\gamma_n=0$, we
obtain
\begin{equation}\label{e20}
    \operatorname{ind}\mathcal{A}(1,0)=\operatorname{ind}\mathcal{A}(1,0)
    -\operatorname{ind}\mathcal{A}(0,0)
    =\sum_{j=1}^nm_j\operatorname{sign}(\gamma_j-\gamma_{j+1})=
    \operatorname{sf}\{A(t)_{L}\}.
\end{equation}
It remains to use Lemma~\ref{l4}. The proof of Theorem~\ref{t1} is
complete.
\end{proof}

Now let us prove Lemmas \ref{l1}--\ref{l4}.
\begin{proof}[Proof of Lemma~\textup{\ref{l1}}.]
To prove that the operator $\mathcal{A}(\theta,\gamma)$ is
Fredholm, it suffices to construct a regularizer, i.e., an operator
\begin{equation*}
    \mathcal{R}\colon L^2_{0\gamma}(X\times\mathbb{R},E)
                  \longrightarrow \mathfrak{D}_{0\gamma}
\end{equation*}
such that the operators $I-\mathcal{A}(\theta,\gamma)\mathcal{R}$
and $I-\mathcal{R}\mathcal{A}(\theta,\gamma)$ are compact in the
spaces $L^2_{0\gamma}(X\times\mathbb{R},E)$ and
$\mathfrak{D}_{0\gamma}$, respectively. This can be done by the
frozen-coefficients technique, standard in elliptic theory. (In our
case, we ``freeze'' the variable $t$). To this end, for given
$\tau\in[0,1]$ and $\nu\in\mathbb{R}$, consider the operator
\begin{equation}\label{e22}
    \frac{\partial }{\partial t}+A(\tau) \colon L^2_{\nu\nu}(X\times\mathbb{R},E)
    \longrightarrow L^2_{\nu\nu}(X\times\mathbb{R},E)
    \quad\text{with domain $\mathfrak{D}_{\nu\nu}$.}
\end{equation}
The operator \eqref{e22} is invertible provided that
$\nu\notin\operatorname{Spec}(A(\tau)_{L})$. The inverse operator
$R_{\nu}(\tau)$ is given by the formula
\begin{equation}\label{e24}
    [R_{\nu}(\tau)u](t)=\frac1{\sqrt{2\pi}}\int_{\operatorname{Im} p=\nu}
    e^{ipt}(ip+A(\tau)_{L})^{-1}\widetilde u(p)\,dp,\qquad
    u\in L^2_{\nu\nu}(X\times\mathbb{R},E),
\end{equation}
where $\widetilde u(p)$, $\operatorname{Im} p=\nu$, is the Fourier
transform of $u$ with respect to the variable $t$. Consider a
finite cover $\{U_j\}_{j=1}^s$ of $[0,1]$ by open intervals such
that $\nu_j\notin\operatorname{Spec}(A(\tau(t,\theta))_L)$ for
$t\in U_j$ for some real numbers $\nu_j$; let $U_0=(-\infty,0)$,
$\nu_0=0$, $U_{s+1}=(1,\infty)$, and $\nu_{s+1}=\gamma$, and let
$1=\sum_{j=0}^{s+1}\psi_j^2$ be a smooth partition of unity
subordinate to the cover of the line $\mathbb{R}$ by the sets
$U_j$, $j=0,\dotsc,s+1$. Then $\mathcal{R}$ can be defined by the formula
\begin{equation*}
    [\mathcal{R} u](t)=\sum_{j=0}^{s+1}\psi_j(t)
    \bigl[[\mathcal{R}_{\nu_j}(\tau)(\psi_ju)](t)\bigr]\big|_{\tau=\tau(t,\theta)}.
\end{equation*}
(Note that $\mathcal{R}$ is well defined as an operator from
$L^2_{0\gamma}(X\times\mathbb{R},E)$ to $\mathfrak{D}_{0\gamma}$,
because the operator of multiplication by $\psi_j$ is continuous
from $L^2_{0\gamma}(X\times\mathbb{R},E)$ to
$L^2_{\nu_j\nu_j}(X\times\mathbb{R},E)$ and from
$\mathfrak{D}_{\nu_j\nu_j}$ to $\mathfrak{D}_{0\gamma}$. For
$j\ne0,s+1$, this follows from the compactness of the support of
$\psi_j$; for $j=0$, from the fact that $\nu_0=0$ and $\psi_0(t)=0$
for $t>0$; for $j=s+1$, from the fact that $\nu_{s+1}=\gamma$ and
$\psi_{s+1}(t)=0$ for $t<0$.) Now a straightforward computation
shows that
\begin{equation*}
    \mathcal{A}(\theta,\gamma)\mathcal{R} u(t)=u(t)
    +\sum_{j=0}^{s+1}\biggl[\biggl[\biggl(\frac{\partial \psi_j}{\partial t} (t)
    \mathcal{R}_{\nu_j}(\tau) +\psi_j(t)\frac{\partial \tau}{\partial t}
    (t,\theta)\frac{\partial \mathcal{R}_{\nu_j}}{\partial \tau}(\tau)\biggr)(\psi_ju)\biggr]
               (t)\biggr]\bigg|_{\tau=\tau(t,\theta)}.
\end{equation*}
Since the functions $\frac{\partial \psi_j}{\partial t}(t)$ and
$\frac{\partial \tau}{\partial t} (t,\theta)$ are compactly
supported, it follows from standard facts about embeddings of
Sobolev spaces that the second term on the right-hand side defines
a compact operator on $L^2_{0\gamma}(X\times\mathbb{R},E)$. In a
similar way, one can study the product
$\mathcal{R}\mathcal{A}(\theta,\gamma)$.

The local constancy of the index of $\mathcal{A}(\theta,\gamma)$ as
a function of $\theta$ follows from the fact that this operator
continuously depends on $\theta$ in the operator norm as an
operator from $\mathfrak{D}_{0\gamma}$ to
$L^2_{0\gamma}(X\times\mathbb{R},E)$. The proof of Lemma \ref{l1}
is complete.
\end{proof}

\begin{proof}[Proof of Lemma \textup{\ref{l3}}]
This lemma is the special case of the invertibility of the operator
\eqref{e22} for $\nu=0$ and $\tau=0$.
\end{proof}

The proof of Lemmas~\ref{l2} and~\ref{l4} is based on the
localization method (the index locality principle; see
\cite[Theorem~4.10]{NSScS99} and also \cite{AAA,R:NaSt3,R:NaSt7}
and the references therein). Having in mind our goals, let us state
the claim of Theorem~4.10 in \cite{NSScS99} for the simplest
special case.

\medskip

\textit{Let $N_1,N_2\subset M$ be disjoint closed subsets of a
manifold $M$, and let $D\colon
\mathcal{H}_1\longrightarrow\mathcal{H}_2$ be a bounded Fredholm
operator acting on some Hilbert spaces of sections of bundles over
$M$. Next, let $\varkappa\colon M\to[0,1]$ be a smooth mapping such
that $N_1\subset f^{-1}(0)$ and $N_2\subset f^{-1}(1)$, and let
$\mathcal{C}\subset C^\infty(M)$ be a subalgebra consisting of
functions constant on $N_1$ and on $N_2$ and containing all
functions of the form $\psi(x)=\varphi(\varkappa(x))$, where
$\varphi$ is a smooth function on $[0,1]$. Suppose that the
commutator of $D$ with the operator of multiplication by any
function in $\mathcal{C}$ is compact. Then the index increments
arising from changes of $D$ on $N_1$ and $N_2$ preserving the
Fredholm property and the compactness of commutators\footnote{The
changes may affect not only the operator itself but also the spaces
on which it acts and even the very manifold (e.g., cutting away
some parts and pasting another ones); all these changes should
occur strictly inside the corresponding set $N_j$, and everything
on $M\setminus N_j$ should remain unchanged.} are
independent\textup:
\begin{equation*}
  \Delta_{N_1\sqcup N_2}=\Delta_{N_1}+\Delta_{N_2},
\end{equation*}
where
\begin{itemize}\itemsep0pt
    \item $\Delta_{N_1}$ is the index increment occurring if the operator
    is changed \textit{only} on $N_1$.
    \item $\Delta_{N_2}$ is the index increment occurring if the operator
    is changed \textit{only} on $N_2$.
    \item $\Delta_{N_1\sqcup N_2}$ is the index increment
    occurring if the operator
    is simultaneously changed
    \textit{both on $N_1$ and $N_2$}.
\end{itemize}}

\medskip

The practical application of the localization method to the proof
of Lemmas~\ref{l2} and~\ref{l4} implements the following idea. We
wish to compute how the index of some operator $D$ changes for a
given change of the operator on a set $N_1$, but it is difficult to
compute the index increment owing to the complicated structure of
$D$ outside $N_1$. Let us modify the operator $D$ on some set $N_2$
disjoint with $N_1$ so as to obtain an operator $\widetilde D$ of
simpler structure whose index increment under the given change on
$N_1$ can be computed. This increment coincides with the desired
increment for the original operator.

\begin{proof}[Proof of Lemma~\textup{\ref{l2}}]
Take $X\times\mathbb{R}$ for the manifold $M$, the set $\{t\ge 2\}$
for $N_1\subset M$, the set $\{t\le 1\}$ for $N_2\subset M$, and
the algebra of infinitely differentiable functions $\varphi(t)$ of
$t\in\mathbb{R}$ constant on $N_1$ and on $N_2$ for the function
algebra $\mathcal{C}$. The original operator $D$ is the operator
$A(\theta,\gamma)$, which we treat as a bounded Fredholm operator
on the spaces
\begin{equation}\label{e110}
    D=A(\theta,\gamma)\colon \mathfrak{D}_{0\gamma}
    \longrightarrow L_{0\gamma}^2(X\times \mathbb{R},E),
\end{equation}
and we need to compute the index increment for this operator if
$\gamma$ is replaced by $\widetilde\gamma$. Note that the
commutator of the operator \eqref{e110} with a smooth function
$\varphi\in\mathcal{C}$ is the operator of multiplication by the
compactly supported function $\varphi'(t)$, which is compact as an
operator from $\mathfrak{D}_{0\gamma}$ to $L_{0\gamma}^2(X\times
\mathbb{R}^1,E)$, so that we are just in a position to use the
localization method. The replacement of $\gamma$ by
$\widetilde\gamma$ changes the operator $D$ only on the set $N_1$.
(The expression specifying the operator and the boundary conditions
remain the same, but the spaces where the operator acts are
changed, the change being solely concerned with the admissible
growth of functions as $t\to+\infty$; i.e., in particular, the
restriction of these spaces to $M\setminus N_1$ is unchanged at
all.) Now let us replace $D$ by the operator
\begin{equation}\label{e111}
    \widetilde D=\frac{\partial }{\partial t}+A(\theta)
    \colon\mathfrak{D}_{\gamma\gamma}
    \longrightarrow L_{0\gamma}^2(X\times\mathbb{R},E).
\end{equation}
This operator differs from $D$ (both in the differential expression
and in the spaces where it acts) only on $N_2$. Thus, it suffices
to compute the index increment for this operator under the change
on $N_1$ the same as for $D$. The operator $\widetilde D$ is
invertible, so that $\operatorname{ind}\widetilde D=0$. The change
of this operator on $N_1$ results in the operator
\begin{equation}\label{e102}
    \frac{\partial }{\partial t}+A(\theta)
    \colon \mathfrak{D}_{\gamma\widetilde\gamma}\longrightarrow
    L^2_{\gamma\widetilde\gamma}(X\times\mathbb{R},E);
\end{equation}
thus, it remains to compute the index of the latter. For this
computation, it is convenient to treat the operator \eqref{e102} as
an unbounded Fredholm operator on
$L^2_{\gamma\widetilde\gamma}(X\times\mathbb{R},E)$ with domain
$\mathfrak{D}_{\gamma\widetilde\gamma}$. Then the adjoint operator
has the form $-\frac{\partial }{\partial t}+A(\theta)$ and acts on
the dual space
$L^2_{-\gamma,-\widetilde\gamma}(X\times\mathbb{R},E)$ with domain
$\mathfrak{D}_{-\gamma,-\widetilde\gamma}$. The elements of the
null space of the operator \eqref{e102} should have the form
$v(x)e^{-\lambda t}$, where $\lambda$ is an eigenvalue of
$A(\theta)_L$ and $v(x)$ is one of the corresponding
eigenfunctions. The condition that these elements belong to the
weighted space $L^2_{\gamma\widetilde\gamma}(X\times\mathbb{R},E)$
implies that $\widetilde\gamma<\lambda<\gamma$. Thus, the dimension
of the null space is equal to the number of eigenvalues
\textup(with regard of multiplicity\textup) of the operator
$A(\theta)_{L}$ in the interval $(\widetilde\gamma,\gamma)$. The
elements of the null space of the adjoint operator should have the
form $v(x)e^{\lambda t}$ and belong to the weighted space
$L^2_{-\gamma,-\widetilde\gamma}(X\times\mathbb{R},E)$). It follows
that $\gamma<\lambda<\widetilde\gamma$, but this is impossible,
because $\gamma>\widetilde\gamma$. Thus, the null space of the
adjoint operator is trivial, the index of the operator~\eqref{e102}
coincides with the dimension of its null space, and we arrive at
the assertion of Lemma~\ref{l2}.
\end{proof}

\begin{proof}[Proof of Lemma~\ref{l4}]
We need to prove that the operators
\begin{equation}\label{e120}
    \mathcal{A}(1,0)\colon \mathfrak{D}_{00}\longrightarrow L^2(X\times\mathbb{R},E)
\end{equation}
and
\begin{equation}\label{e121}
    \biggl(\frac{\partial }{\partial t}+A(t)\biggr)_\mathcal{L}\colon
    \mathfrak{D}_\mathcal{L}\longrightarrow L^2(X\times
    S^1,\mathcal{E})
\end{equation}
have the same index. We assume (this can always be achieved by a
homotopy) that $A(t)=A(1)$ for $t\ge1-2\varepsilon$ and $A(t)=A(0)$
for $t\le2\varepsilon$ for some given $\varepsilon>0$. Set
\begin{equation*}
    N_1=\{t\in (-\infty,\varepsilon]\cup[1-\varepsilon,\infty)\},\qquad
    N_2=\{t\in[2\varepsilon,1-2\varepsilon]\}.
\end{equation*}
For the function algebra $\mathcal{C}$ we again take the algebra of
infinitely differentiable functions $\varphi(t)$ of $t$ constant on
$N_1$ and $N_2$. The operator \eqref{e121} can be obtained from the
operator \eqref{e120} by the following change on $N_1$: one cuts
away and disposes of the half-cylinders $X\times(-\infty,0)$ and
$X\times(1,\infty)$, and the faces $X\times\{0\}$ and
$X\times\{1\}$ of the remaining product $X\times[0,1]$ are glued
together, the bundle $E$ giving rise to the bundle $\mathcal{E}$
via the clutching automorphism $U\colon E|_{t=0}\longrightarrow
E|_{t=1}$.

We should show that the index increment for this change of the
operator on $N_1$ is zero. To this end, we replace the operator
\eqref{e120} by the operator
\begin{equation}\label{e122}
 \mathcal{A}(0,0)\colon \mathfrak{D}_{00}\longrightarrow L^2(X\times\mathbb{R},E).
\end{equation}
The operator \eqref{e122} will differ from the operator
\eqref{e120} only on $N_2$ if we rewrite the former in the
equivalent form
\begin{equation}\label{e123}
    \mathcal{A}(0,0)=\begin{cases}
    \frac{\partial }{\partial t}+A(0) &\text{for $t\le\frac12$,}\\
    \frac{\partial }{\partial t}+A(1) &\text{for $t\ge\frac12$,}
    \end{cases}
\end{equation}
where it is assumed that the bundle in which the operator
\eqref{e123} acts is obtained from $E|_{t\le\frac12}$ and
$E|_{t\ge\frac12}$ by the standard clutching construction at
$t=\frac12$ with the automorphism
\begin{equation*}
    U^{-1}\colon E|_{t=1/2+0}\longrightarrow E|_{t=1/2-0}.
\end{equation*}
(The passage from \eqref{e122} to \eqref{e123} is essentially none
other than rewriting the operator $\mathcal{A}(0,0)$ for
$t\ge\frac12$ in ``new coordinates'' in the fibers of $E$.)

Now let us change the operator \eqref{e122} written in the form
\eqref{e123} on $N_1$ in the same way as we have earlier changed
the operator \eqref{e120}. The resulting operator on $X\times S_1$
has the form \eqref{e123}, and the bundle in which it acts is
obtained from $E$ by clutching construction with the automorphism
\begin{equation*}
    U\colon E|_{t=0}\longrightarrow E|_{t=1},\qquad
    U^{-1}\colon E|_{t=1/2+0}\longrightarrow E|_{t=1/2-0}.
\end{equation*}
It is easily seen that this bundle is isomorphic to the pullback of
$E$ on $X\times S_1$, and the resulting operator itself is none
other than $\bigl(\frac{\partial }{\partial t}+A(0)\bigr)_L$; its
index is zero, because it is invariant with respect to rotations
along $S^1$. The index of the operator~\eqref{e120} is zero as well
(it is invertible), so that the index increment is zero, and the
proof of the lemma is complete.
\end{proof}

\begin{proof}[Proof of Theorem~\ref{co1}]
Theorem~\ref{t1} shows that the spectral flow of the family
\eqref{ei1a} obeys the localization principle: any modifications
applied to the Dirac operator \eqref{ei1} in the planar domain $X$
and to the function $B$ occurring in the boundary conditions
automatically lift to $X\times S_1$ becoming modifications of
$\frac{\partial
}{\partial t}+D_t$; the latter enjoy the index locality principle
\cite[Theorem~4.10]{NSScS99}, and the index is equal to the
spectral flow by Theorem~\ref{t1}.

\begin{figure}
\centering
 \includegraphics[scale=0.6]{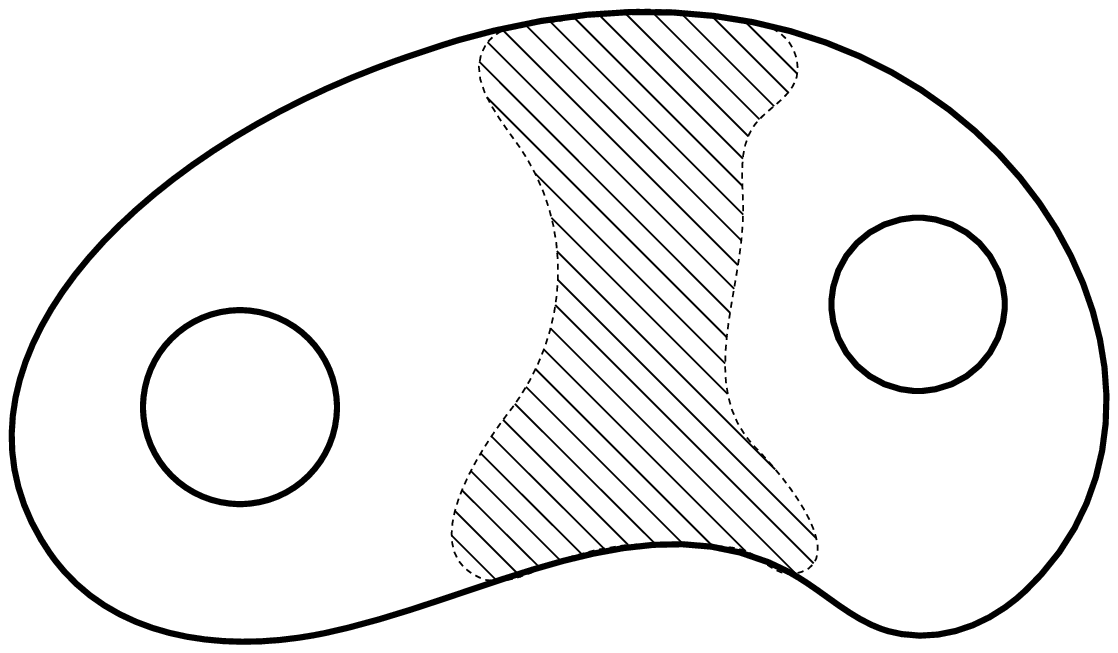}
 \includegraphics[scale=0.6]{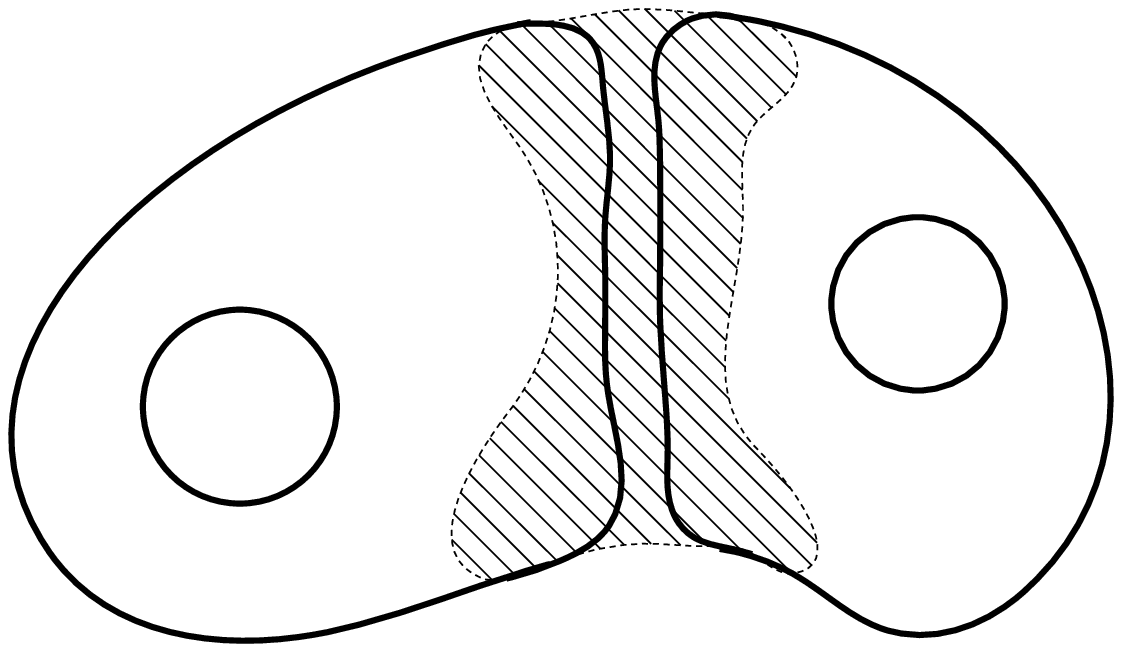}
\vglue5pt
\caption{Cutting a domain with holes into pieces.\label{fig1}}
\end{figure}

\begin{figure}
 \centering
 \includegraphics[scale=0.6]{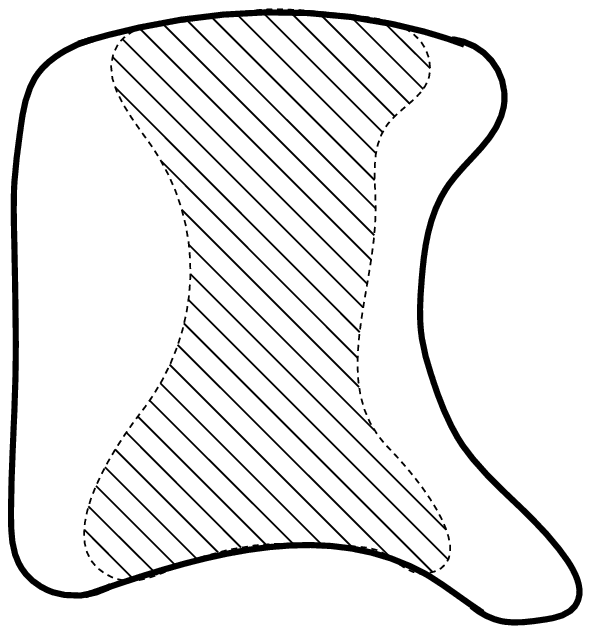}
 \includegraphics[scale=0.6]{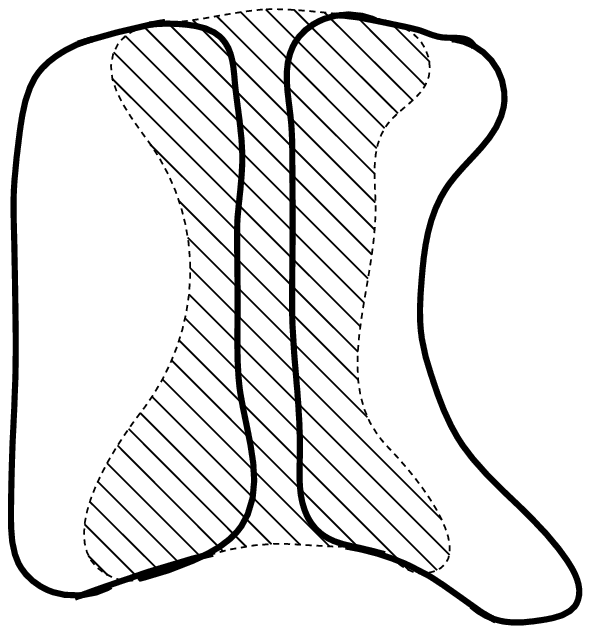}
\vglue5pt
\caption{Cutting a domain without holes into pieces.\label{fig2}}
\end{figure}

The localization principle permits one to split the domain with
holes into parts with fewer holes. Let us show this by example.
Figure~\ref{fig1}, left shows a domain $X$ with two holes. Let us
reduce the computation of the spectral flow of a family of Dirac
operators with local boundary conditions in $X$ to the
corresponding computation for domains with one hole. Let $N_1$ be
the set dashed in Fig.~\ref{fig1}, left, and let $N_2$ be the
complement to a small neighborhood of $N_1$. Let us change the
domain inside $N_1$ as shown in Fig.~\ref{fig1}, right, so that the
original domain becomes two domains with smooth boundary. The
function $B(x)$ occurring in the boundary conditions can be
extended by continuity as a nonvanishing real-valued function to
the newly arising boundary arcs inside $N_1$, because the sign of
$B(x)$ is the same on the entire outer boundary of the original
domain. Thus, the domain splits into two unrelated parts, and to
prove that the spectral flow of the family of Dirac operators in
the original domain is equal to the sum of spectral flows
corresponding to the two new domains, one should show that the
increment of the spectral flow under this modification of the
domain is zero. To this end, we use the localization method. Let us
change the original family by changing the domain in $N_2$ (so that
the resulting domain has the form shown in Fig.~\ref{fig2}, left)
and by extending $B(x)$ by continuity as a nonvanishing function to
the newly arising boundary arcs inside $N_2$. The spectral flow of
the new family is zero before as well as after the modification
shown in Fig.~\ref{fig2}, because the domains in this figure are
contractible and the gauge transformation $\mu$ in these domains is
homotopic to the identity transformation. Thus, cutting the domain
into pieces reduces the problem to the case of domains with one
hole, for which formula \eqref{e9} was proved in \cite{Pro1}.
(Needless to say, one can prove it directly with the use of Theorem
\ref{t1}, but there is no need to do this, and we omit the
corresponding computations for lack of space.)
\end{proof}

\begin{center}
\textbf{\large{5. CONCLUSIONS}}
\end{center}

Let us discuss possible physical realizations of nonzero spectral
flow. We start with the case of graphene. One has to keep in mind
that there are {\it two} Dirac electron subsystems in graphene (two
valleys) and, generally speaking,  scattering at the edges mixes
the two valleys \cite{Kats,17*}. This is not the case, however, if
an energy gap in the electron energy spectrum opens smoothly when
reaching the edge. At the chemical functionalization of the edges
this is, indeed, the case, since electronic structure is modified
in a sufficiently large region of space \cite{18*}. As a result,
intervalley scattering is negligible and we have the boundary
condition~\eqref{ei1} suggested first by Berry and Mondragon
\cite{19*}. A detailed microscopic derivation of the boundary
condition starting with a discrete lattice model has been done in
Ref. \cite{17*} (see also \cite[Chapter~5]{Kats} and references
therein). The sign of the constant $B$ is determined by the sign of
the mass term in the Dirac equation and is dependent on the
distribution of chemical groups along the edge. One can hope that
if we prepare graphene rings, then in some specimens the signs of
$B$ will be opposite at the internal and external edges of the
ring, which is necessary for nonzero spectral flow. However, it is
hard to reach in a controllable way.

Probably, topological insulators are more promising in this sense.
First, two-dimensional massless Dirac fermions are realized at the
surface of three- dimensional topological insulators, such as
$\operatorname{Bi}_2\operatorname{Se}_3$, only one Dirac cone
arising \cite{8*,9*}. To open the gap, one has to cover the surface
by a magnetic layer, the sign of the gap being determined by the
direction of magnetization \cite{8*,9*}. This opens a way to
manipulate the sign of the constant $B$. Second, two-dimensional
massless Dirac fermions can be realized in a layer of
$\operatorname{HgTe}$ confined between two layers of
$\operatorname{CdTe}$, at a certain critical thickness of the layer
\cite{8*,9*}. Recently, such an opportunity has been demonstrated
experimentally \cite{20*}. If the thickness of the layer varies
smoothly in space oscillating near the critical value, one can
reach both positive and negative values of $B$. Currently, this
opportunity to create nonzero spectral flow looks the most
promising.

\paragraph{Acknowledgments.} The second author's research was supported by
the Russian Foundation for Basic Research (grant no.~11-01-00973).
The authors are grateful to S.~Yu~Dobrokhotov and A.~I.~Shafarevich
for attention and useful discussion.

\end{document}